\newtheorem{theorem}{Theorem}[section]
\newtheorem{corollary}[theorem]{Corollary}
\newtheorem{lemma}[theorem]{Lemma}
\newtheorem{proposition}[theorem]{Proposition}
\theoremstyle{definition}
\newtheorem{definition}[theorem]{Definition}
\newtheorem{assumption}[theorem]{Assumption}
\newtheorem{remark}[theorem]{Remark}
\newtheorem{algorithm}[theorem]{Algorithm}
\numberwithin{equation}{section}
\begin{document}

\renewcommand{\baselinestretch}{1.2}

\title {Inexact Newton Method for M-Tensor Equations\thanks{Supported by the NSF of China grant 11771157
and the NSF of Guangdong Province grant No.2020B1515310013.}}
\author{Dong-Hui Li
\thanks{School of Mathematical Sciences, South China Normal University, Guangzhou, 510631, China,
lidonghui@m.scnu.edu.cn}
\ \ Hong-Bo Guan\thanks{School of Mathematical Sciences, South China Normal University, Guangzhou, 510631, China,
hongbo guan@m.scnu.edu.cn.\,\,
School of Mathematics, Physics and Energy Engineering, Hunan Institute of Technology,
Hengyang, 421002, China.} 
\ \ Jie-Feng Xu\thanks{School of Mathematical Sciences, South China Normal University, Guangzhou, 510631, China,
2018021699@m.scnu.edu.cn.}
\\
}
\maketitle

\begin{abstract}
  We first investigate properties of M-tensor equations. In particular, we show that if the constant term
of the equation is nonnegative, then finding a nonnegative solution of the equation can be done by finding
a positive solution of a lower dimensional M-tensor equation.
We then propose an inexact Newton method to find a positive solution to the lower dimensional
equation and establish its global convergence. We also show that the convergence rate
of the method is quadratic. At last, we do numerical experiments to test the proposed Newton method.
The results show that the proposed  Newton  method has a very good numerical performance.
\end{abstract}

{\bf Keywords} M-tensor equation, inexact Newton method, global convergence, quadratic convergence

{\bf AMS} 65H10, 65K10, 90C30

\section{Introduction}
Tensor equation is a special system of nonlinear equations. It is also called multilinear equation.
Tensor equation can be expressed as
\begin{equation}\label{eqn:M-teq}
F(x)={\cal A}x^{m-1} - b=0,
\end{equation}
where $x,b\in \mathbb{R}^n$ and ${\cal A}$ is an ${m}$th-order ${n}$-dimensional tensor that takes the form
\[
{\cal A} = (a_{i_1i_2\ldots i_m}), \quad  a_{i_1i_2\ldots i_m}\in \mathbb{R}, \quad 1\le i_1, i_2, \cdots, i_m \le n,
\]
and ${{\cal A}x^{m-1}}\in \mathbb{R}^n$  with elements
\[
({\cal A}x^{m-1})_i = \sum_{i_2, \ldots, i_m}a_{ii_2\ldots i_m}x_{i_2}\cdots x_{i_m},\quad i=1,2,\ldots,n.
\]
The notation  ${\cal A}x^m$  will denote the homogenous polynomial of degree $m$, i.e.,
\[
{\cal A}x^m=x^T{\cal A}x^{m-1}= \sum_{i_1,\ldots, i_m} a_{i_1\ldots i_m}x_{i_1}\cdots x_{i_m}.
\]

For convenience of presentation, we introduce some concepts and notations,
 which will be used throughout the paper.
We denote the set of all ${m}$th-order ${n}$-dimensional tensors by ${\cal T}(m,n)$.
We first introduce the concepts of Z-matrix and M-matrix.
\begin{definition}{\em\cite{Berman-Plemmons-94}}\label{marti-ZM}
A matrix $A$ is called a Z-matrix if all its off-diagonal entries are non-positive. It is
apparent that a Z-matrix $A$ can be written as
\[
A=sI-B,
\]
where $B$ is a nonnegative matrix ($B\geq 0$) and $s>0$; When $s\geq \rho(B)$,
we call $A$ is an M-matrix; And further when $s>\rho(B)$, we call $A$ as a nonsingular M-matrix.
\end{definition}
The concept of M-tensor is an extension of the definition of M-matrix. Now we introduce the definition of M-tensor and other structure tensors that will be involved in this paper.

\begin{definition}
{\rm\cite{Chang-Pearson-Zhang-11,Ding-Qi-Wei-13,Ding-Wei-16,Lim-05,Qi05,Qi-Chen-Chen-18,Qi-Luo-17,Zhang-Qi-Zhou-2014}}
Let ${\cal A}\in {\cal T}(m,n)$.
\begin{itemize}
\item   ${\cal A}$ is called a non-negative tensor, denoted by ${\cal A}\ge 0$,
if all its elements are non-negative, i.e., $a_{i_1i_2\ldots i_m}\ge 0$, $\forall i_1,\ldots, i_m\in [n]$,
where $[n]=\{1,2\cdots,n\}$.

\item  ${\cal A}$ is called a symmetric tensor,
if its elements $a_{i_1i_2\ldots i_m}$ are invariant under any permutation of their indices.
In particular, for every index $i\in [n]$, if an (m-1)th order n-dimensional square tensor
${\cal A}_i:=(a_{ii_2\ldots i_m})_{1\leq i_2,\ldots,i_m\leq n}$ is symmetric, then ${\cal A}$ is called semi-symmetric tensor with respect to the indices $\{i_2,\ldots,i_m\}$.
The set of all ${m}$th-order ${n}$-dimensional symmetric tensors is denoted by ${\cal ST}(m,n)$.

\item  ${\cal A}$ is called the identity tensor, denoted by ${\cal I}$,
if its diagonal elements are all ones and other elements are zeros,
i.e., all $a_{i_1i_2\ldots i_m}=0$ except $a_{ii\ldots i}=1$, $\forall i\in [n]$.

\item If a real number $\lambda$ and a nonzero real vector $x\in \mathbb{R}^n$ satisfy
$${\cal A}x^{m-1}=\lambda x^{[m-1]},$$
    then $\lambda$ is called an H-eigenvalue of ${\cal A}$ and $x$ is called an H-eigenvector of ${\cal A}$
    associated with $\lambda$.

\item  ${\cal A}$ is called an M-tensor, if it can be written as
\begin{equation}\label{M-tensor}
{\cal A}=s{\cal I}-{\cal B},\quad {\cal B}\ge 0,\; s\ge \rho ({\cal B}),
\end{equation}
where $\rho ({\cal B})$ is the spectral radius of tensor ${\cal B}$, that is
\[
\rho(\cal B)= \max \left\{\left| \lambda \right|: \lambda \mbox{ is an eigenvalue of } \cal{B}\right \}.
\]
 If $s> \rho ({\cal B})$,
then ${\cal A}$ is called a strong or nonsingular M-tensor.

\item ${\cal A}$ is called a lower triangular tensor, if its possibly nonzero elements are
$a_{i_1i_2\ldots i_m}$
 with $i_1=1,2,\ldots,n$ and $i_2,\ldots,i_m\leq i_1$ and all other elements of ${\cal A}$ are zeros.
 ${\cal A}$ is
    called a strictly lower triangular tensor, if its possibly nonzero elements are
    $a_{i_1i_2\ldots i_m}$
    with $i_1=1,2,\ldots,n$ and $i_2,\ldots,i_m< i_1$ and all other elements of ${\cal A}$ are zeros.
\item ${\cal A}$ is called reducible it there is an index set
    $I\subset [n]$ such that the elements of $\cal A$ satisfy
\[
a_{i_1i_2\ldots i_m} = 0,\quad \forall i_1 \in I, \,
    \forall i_2,\ldots, i_m \notin I.
\]
If ${\cal A}$ is not reducible, then we call ${\cal A}$ irreducible.
\end{itemize}
\end{definition}

In the case ${\cal A}\in {\cal ST}(m,n)$, the derivative of the homogeneous polynomial ${\cal A}x^m$
can be expressed as  $\nabla ({\cal A}x^m)=m{\cal A}x^{m-1}$.

In the definition of reducible tensor, the index set $I\subset [n]$ can be arbitrary.
In our paper, we will need some special reducible tensor where  the index set $I$
is contained in some specified set. For the sake of convenience, we make a slight extension to
the definition of reducible tensors.
\begin{definition}
Tensor ${\cal A}\in {\cal T}(m,n)$ is called reducible respect to
    $I\subset [n]$ if its  elements satisfies
\[
a_{i_1i_2\ldots i_m} = 0,\quad \forall i_1 \in I, \,
    \forall i_2,\ldots, i_m \notin I.
\]
\end{definition}

It is easy to see that tensor ${\cal A}\in {\cal T}(m,n)$ is reducible if and only if there is
an index $I\subset [n]$ such that it is reducible respect to $I$.

We call index pair $(I,I_c)$ a partition to $[n]$ if $I, I_c\subset [n]$ and $I\cup I_n=[n]$.

For $x,y\in \mathbb{R}^n$ and $\alpha \in \mathbb{R}$,
the notations $x\circ y$ and $x^{[\alpha]}$ are vectors in $\mathbb {R}^n$
defined by
\[
x\circ y=(x_1y_1,\cdots,x_ny_n)^T
\]
and
\[
x^{[\alpha]}=(x_1^{\alpha},\ldots, x_n^\alpha)^T
\]
respectively.

We use $\mathbb {R}_+^n$ and $\mathbb {R}_{++}^n$ to denote the sets of all nonnegative vectors and positive
vectors in $\mathbb {R}^n$. That is,
\[
\mathbb {R}_+^n=\{x\in \mathbb {R}^n\;|\; x\ge 0\}\quad
\mbox{and}\quad \mathbb {R}_{++}^n=\{x\in \mathbb {R}^n\;|\; x> 0\}.
\]

If $\cal A$ is an M-tensor, we call the tensor equation an M-tensor equation and abbreviate it as M-Teq.

The following theorem comes from \cite{Berman-Plemmons-94,Ding-Wei-16, Gowda-Luo-Qi-Xiu-15}.
\begin{theorem}\label{th:1} Let ${\cal A}\in {\cal ST}(m,n)$.
\begin{itemize}
\item {\rm(\cite{Ding-Wei-16})} If ${\cal A}$ is a strong M-tensor and $b\in \mathbb {R}_{++}^n$, then the
M-Teq {\rm(\ref{eqn:M-teq})} has a unique positive solution.
\item {\rm(\cite{Gowda-Luo-Qi-Xiu-15})} If ${\cal A}$ is a strong M-tensor and $b\in \mathbb {R}_{+}^n$, then the
M-Teq {\rm(\ref{eqn:M-teq})} has a nonnegative solution.
\item {\rm(\cite{Berman-Plemmons-94})} For a Z-matrix $A\in \mathbb{R}^{n\times n}$, the following statements are equivalent.
\begin{itemize}
\item [{\rm(i)}] A is a nonsingular M-matrix.
\item [{\rm(ii)}] $Av\in \mathbb{R}_{++}^n$ for some vector $v\in \mathbb{R}_{++}^n$.
\item [{\rm(iii)}] All the principal minors of A are positive.
\end{itemize}
\end{itemize}
\end{theorem}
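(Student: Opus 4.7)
All three items are quoted results, so I will only sketch the strategies I would follow independently. For part (i), the plan is a monotone fixed-point iteration. Writing ${\cal A} = s{\cal I} - {\cal B}$ with $s > \rho({\cal B})$ and ${\cal B} \ge 0$, the equation ${\cal A}x^{m-1} = b$ rearranges to
\[
x^{[m-1]} = \frac{1}{s}\bigl({\cal B}x^{m-1} + b\bigr),
\]
suggesting the iteration $x^{k+1} = \bigl(({\cal B}(x^k)^{m-1} + b)/s\bigr)^{[1/(m-1)]}$ starting from $x^0 = 0$. Since ${\cal B} \ge 0$ and $b > 0$, the iterates will form an entrywise nondecreasing sequence. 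For an upper bound I would invoke the fact that $s > \rho({\cal B})$ supplies a vector $v > 0$ with ${\cal A}v^{m-1} > 0$ (the Perron--Frobenius content of the strong M-tensor hypothesis); a sufficiently large scalar multiple of $v$ then dominates every iterate, so the sequence converges to a positive solution. Uniqueness would follow from a homogeneity/scaling comparison: given two positive solutions $x, y$ one forms $t = \max_i x_i/y_i$, uses the nonnegativity of ${\cal B}$ together with the strict positivity of $b$ to conclude $t \le 1$, and symmetrically $1/t \le 1$.

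For part (ii), since $b$ is only nonnegative, the fixed-point construction may not land in the strict positive cone, so I would switch to a perturbation-and-compactness argument. Set $b_\epsilon = b + \epsilon \mathbf{1}$ for $\epsilon > 0$; by part (i) there is a unique positive solution $x_\epsilon$ of ${\cal A}x^{m-1} = b_\epsilon$. The main technical step, and what I expect to be the real obstacle, is a uniform bound on $\{x_\epsilon\}$ as $\epsilon \downarrow 0$; once this is in hand, a subsequential limit gives a nonnegative solution of the original equation. Boundedness itself should follow from a coercivity estimate for the strong M-tensor map on $\mathbb{R}_+^n$, again produced by comparing $x_\epsilon$ against a scaled version of the positive vector $v$ with ${\cal A}v^{m-1} > 0$.

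For part (iii), the matrix equivalences are classical and I would prove them in the usual cyclic order. The implication (i)$\Rightarrow$(ii) I would obtain by taking $v = A^{-1}\mathbf{1}$ with $A^{-1} = s^{-1}\sum_{k \ge 0}(B/s)^k \ge 0$, a Neumann series valid precisely because $s > \rho(B)$; no row of $A^{-1}$ can vanish, so $v > 0$. For (ii)$\Rightarrow$(iii) I would argue by induction on $n$: restricting $v$ to any index subset $I$ and observing that the omitted off-diagonal contributions are nonpositive shows that every principal submatrix of $A$ still satisfies (ii), hence is invertible with positive determinant by a standard eigenvalue argument. Finally (iii)$\Rightarrow$(i) amounts to ruling out $\rho(B) \ge s$: positivity of $\det(A)$ forbids $s$ itself as an eigenvalue of $B$, and Perron--Frobenius applied to $B \ge 0$ then forces $\rho(B) < s$.
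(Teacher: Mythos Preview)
Your opening observation is exactly right: the paper does not prove this theorem at all. Theorem~\ref{th:1} is stated purely as a compendium of known results, with citations to \cite{Ding-Wei-16}, \cite{Gowda-Luo-Qi-Xiu-15}, and \cite{Berman-Plemmons-94} attached to the three respective items, and the text moves on immediately without any argument. So there is no ``paper's own proof'' to compare your proposal against.

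That said, the sketches you give are the standard routes and match what one finds in the cited sources. The monotone fixed-point iteration for part~(i) is precisely the approach of Ding and Wei; the perturbation-and-limit idea for part~(ii) is in the spirit of the Z-tensor complementarity arguments; and your cyclic proof of the matrix equivalences in part~(iii) is the textbook treatment from Berman and Plemmons. Nothing is missing or wrong here --- you have simply supplied more than the paper itself does.
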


Tensor equation is also called multilinear equation. It appears in many practical
fields including data mining and numerical partial differential
equations \cite{4,Ding-Wei-16,8,Fan-Zhang-Chu-Wei-17,Kressner-Tobler-10,Li-Xie-Xu-17,Li-Ng-15,Xie-Jin-Wei-2017}.
The study in numerical methods for solving tensor equations has begun only a few years ago.
Most existing methods focus on solving the M-Teq under the restriction $b\in \mathbb {R}_{++}^n$ or
$b\in \mathbb {R}_{+}^n$.
Such as the iterative methods in \cite{Ding-Wei-16},
the homotopy method in \cite{Han-17},
the tensor splitting method in \cite{Liu-Li-Vong-18},
the Newton-type method in \cite{He-Ling-Qi-Zhou-18},
the continuous time neural network method in \cite{Wang-Che-Wei-19},
the preconditioned tensor splitting method in \cite{L-L-V-2018},
the preconditioned SOR method in \cite{Liu-Li-Vong-2020},
the preconditioned Jacobi type method in \cite{Zhang-Liu-Chen-2020},
the nonnegativity preserving algorithm in \cite{Bai-He-Ling-Zhou-18}.
There are also a few methods that can solve M-Teq (\ref{eqn:M-teq})  without restriction $b\in \mathbb {R}_{++}^n$
or
that $\cal A$ is an M tensor.
Those
methods include the splitting method by Li, Guan and Wang \cite{Li-Guan-Wang-18}, and
Li, Xie and Xu \cite{Li-Xie-Xu-17}, the  alternating projection method by
Li, Dai and Gao \cite{Dai-2019}, the alternating iterative methods by Liang, Zheng and Zhao \cite{Liang-Zheng-Zhao-2019} etc..
Related works can also be found in
\cite{B-H-C-2020,4,Li-Ng-15,Lv-Ma-18,Wang-Che-Wei-2020,Wang-Che-Wei-2019,Xie-Jin-Wei-2017a,Xie-Jin-Wei-2017,Huang-18,Yan-Ling-18}.

Newton's method is a well-known efficient method for solving nonlinear equations. An attractive property of
the method is its quadratic convergence rate.
However, in many cases, the standard
Newton method may fail to work or loss its quadratic convergence property when applied to solve
tensor equation (\ref{eqn:M-teq}). We refer to
\cite{Li-Guan-Wang-18} for details.

Recently, He, Ling, Qi and Zhou \cite{He-Ling-Qi-Zhou-18} proposed a Newton type method
for solving the M-Teq (\ref{eqn:M-teq}) with $b\in \mathbb {R}_{++}^n$. Unlike the standard
Newton method for solving nonlinear equations, by utilizing  the special structure of the equation (\ref{eqn:M-teq}),
the authors transformed the equation into an equivalent form through a variable transformation $y=x^{[m]}$.
Starting from some positive initial point, the method generates a sequence of positive iterates.
An attractive property of the method is that the Jacobian matrices of the equation at the iterates are
nonsingular. As a result, the method is well defined and retains the
global and quadratic convergence. The reported numerical results
in \cite{He-Ling-Qi-Zhou-18} confirmed the quadratic convergence property of that method.

It should be pointed out that the positivity of $b$ plays an important role in the Newton method
by He, Ling, Qi and Zhou \cite{He-Ling-Qi-Zhou-18}. It is not known if the method in \cite{He-Ling-Qi-Zhou-18}
is still well defined and reserves  quadratic convergence property  if there is some $i$ satisfying $b_i=0$.
The purpose of this paper is to develop a Newton method to find the a nonnegative solution of
the equation (\ref{eqn:M-teq}) with $b\in \mathbb {R}_{+}^n$. Our idea is similar to but different from that of
the method in  \cite{He-Ling-Qi-Zhou-18}. Specifically, we will reformulate the equation
via the variable transformation $y=x^{[m-1]}$. Such an idea comes from the following observation.
Consider a  vary special tensor equation
\[
Ax^{[m-1]}-b=0,
\]
corresponding to the tensor equation (\ref{eqn:M-teq})
where the only nonzero elements of $\cal A$ are
$a_{ij\ldots j}=a_{ij}$, $i,j=1,2,\ldots,n$. For that special equation, the tensor equation is
equivalent to the system of linear equation
$Ay-b=0$ with $y=x^{[m-1]}$. As a result, the corresponding Newton method terminates
at a solution of the equation within one iteration.
Another difference between our method and the method in  \cite{He-Ling-Qi-Zhou-18}
is that we will consider the equation (\ref{eqn:M-teq}) with $b\in \mathbb {R}_{+}^n$.
The case where $b$ has zero elements cause the problem be much more difficult.
Existing techniques that deals with equation (\ref{eqn:M-teq}) with $b\in \mathbb {R}_{++}^n$ are no longer available.
To overcome that difficult, we will propose a criterion that can identify the zero elements
in a nonnegative solution of the M-tensor equation. From computational view point, the
criterion is easy to implement. By the use of that criterion, we can
get a nonnegative solution of the M-tensor equation (\ref{eqn:M-teq}) by finding a positive solution to a
lower dimensional M-tensor equation with nonnegative constant term.

Based on that criterion, we propose a Newton method for finding a positive solution of the
M-Teq  with $b\in \mathbb {R}_{+}^n$ and establish its global and quadratic convergence.

The remainder of the paper is organized as follows. In the next section, we
investigate some nice properties of the M-tensor equation (\ref{eqn:M-teq}).
In particular, we propose a criterion to distinguish zero and nonzero elements of a
nonnegative solution of the equation. In Section 3,
we propose a Newton method to get a positive solution to the M-Teq  (\ref{eqn:M-teq}) with $b\in \mathbb {R}_{++}^n$ and
establish its global and quadratic convergence.
In Section 4, we extend the method proposed in Section 3 to the M-Teq (\ref{eqn:M-teq}) with $b\in \mathbb {R}_{+}^n$
and show its  global and quadratic convergence. At last, we do numerical experiments to test the proposed method in Section Section 5.

\section{Properties of M-Tensor Equations}
\label{sec:2}

Throughout this section, we suppose that tensor ${\cal A}\in {\cal T}(m,n)$ is a strong M-tensor.

The following lemma was proved by Li, Guan and Wang \cite{Li-Guan-Wang-18}.
\begin{lemma}\label{lem-g}
If ${\cal A}$ is a strong M-tensor, and the feasible set ${\cal S}$ defined by
\[
{\cal S} \stackrel\triangle {=}\{x\in \mathbb R^n_+|\;F(x)={\cal A}x^{m-1}-b\leq 0\}
\]
is not empty, then ${\cal S}$ has a largest element that is the largest
nonnegative solution to the M-tensor equation $F(x)={\cal A}x^{m-1}-b=0$.
\end{lemma}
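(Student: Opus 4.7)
The plan is to realize the largest element of ${\cal S}$ as the componentwise supremum of ${\cal S}$, argue that this supremum actually lies in ${\cal S}$, and then promote the inequality $F(x^*)\le 0$ to equality. The argument has three ingredients: a lattice closure property of ${\cal S}$, an a priori upper bound on ${\cal S}$ supplied by an auxiliary equation with a strictly positive right-hand side, and a perturbation that rules out slack at any component of $F$.

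First I would show ${\cal S}$ is closed under componentwise maximum. Write ${\cal A}=s{\cal I}-{\cal B}$ with ${\cal B}\ge 0$. Given $x,y\in{\cal S}$, set $z=\max(x,y)$ componentwise and fix an index $i$; without loss of generality $z_i=x_i$. Then $s z_i^{m-1}=s x_i^{m-1}$, while ${\cal B}\ge 0$ and $z\ge x\ge 0$ force $({\cal B}z^{m-1})_i\ge ({\cal B}x^{m-1})_i$. Hence $({\cal A}z^{m-1})_i = s x_i^{m-1} - ({\cal B}z^{m-1})_i \le ({\cal A}x^{m-1})_i \le b_i$, so $z\in{\cal S}$. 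This lattice closure is what lets componentwise suprema be realized inside ${\cal S}$.

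Next I would bound ${\cal S}$ from above. Pick $\bar b\in\mathbb{R}^n_{++}$ with $\bar b>b$; Theorem \ref{th:1}(i) supplies a unique positive solution $\bar x$ of ${\cal A}\bar x^{m-1}=\bar b$. I claim $x\le\bar x$ for every $x\in{\cal S}$. If not, let $t=\max_i x_i/\bar x_i>1$, attained at some $i_0$, so $x\le t\bar x$ with equality at $i_0$. Using ${\cal B}\ge 0$ and the homogeneity $({\cal B}(t\bar x)^{m-1})_{i_0}=t^{m-1}({\cal B}\bar x^{m-1})_{i_0}$,
\[
({\cal A}x^{m-1})_{i_0}=s(t\bar x_{i_0})^{m-1}-({\cal B}x^{m-1})_{i_0}\ge t^{m-1}\bigl(s\bar x_{i_0}^{m-1}-({\cal B}\bar x^{m-1})_{i_0}\bigr)=t^{m-1}\bar b_{i_0}>b_{i_0},
\]
contradicting $x\in{\cal S}$. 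Thus ${\cal S}\subset[0,\bar x]$ and the coordinatewise supremum $x^*_i=\sup\{x_i:x\in{\cal S}\}$ is finite.

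Finally I would check $x^*\in{\cal S}$ and $F(x^*)=0$. For the former, pick for each $i$ a sequence $x^{(k,i)}\in{\cal S}$ with $x^{(k,i)}_i\to x^*_i$; by the lattice property $y^{(k)}=\max_i x^{(k,i)}\in{\cal S}$, and $y^{(k)}\to x^*$, so continuity of $F$ gives $F(x^*)\le 0$ and $x^*\in{\cal S}$. For the latter, suppose $F_i(x^*)<0$ for some $i$. The off-diagonal entries of ${\cal A}$ are non-positive, so for $j\ne i$ the map $\xi\mapsto F_j(x^*+\xi e_i)$ is a polynomial in $\xi$ with non-positive coefficients and stays $\le 0$, while continuity preserves $F_i<0$ for small $\xi>0$. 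Hence $x^*+\xi e_i\in{\cal S}$, contradicting maximality, so $F(x^*)=0$. Any nonnegative solution $\tilde x$ of ${\cal A}x^{m-1}=b$ lies in ${\cal S}$ trivially, giving $\tilde x\le x^*$, which is the ``largest nonnegative solution'' claim. The hard part will be the boundedness step, where the strict inequality $s>\rho({\cal B})$ defining a strong M-tensor enters essentially through Theorem \ref{th:1}(i) and the homogeneous scaling comparison; the lattice closure and the final perturbation are comparatively routine once the box $[0,\bar x]$ is in hand.
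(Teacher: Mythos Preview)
The paper does not actually prove this lemma: it is quoted from Li, Guan and Wang \cite{Li-Guan-Wang-18} and then used as a black box in Proposition~\ref{prop:mono-sol} and later results. So there is no in-paper proof to compare against.

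Your argument is correct and self-contained. The three pieces---lattice closure of ${\cal S}$ under componentwise maximum (from ${\cal A}=s{\cal I}-{\cal B}$ with ${\cal B}\ge 0$), the uniform upper bound via the unique positive solution of ${\cal A}\bar x^{m-1}=\bar b$ for some $\bar b>b$, $\bar b\in\mathbb R^n_{++}$ (Theorem~\ref{th:1}), and the coordinate perturbation ruling out $F_i(x^*)<0$---fit together cleanly. The scaling step in the boundedness argument is where the strong M-tensor hypothesis enters, exactly as you note: $t^{m-1}\bar b_{i_0}>\bar b_{i_0}>b_{i_0}$ needs $\bar b_{i_0}>0$, which is why you invoke the Ding--Wei existence result for a strictly positive right-hand side rather than trying to work with $b$ directly. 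In the final perturbation, your claim that $\xi\mapsto F_j(x^*+\xi e_i)$ has all non-positive coefficients (including the constant term $F_j(x^*)\le 0$) is correct because every monomial of positive degree in $\xi$ carries an off-diagonal entry $a_{ji_2\cdots i_m}$ with some $i_k=i\ne j$, hence $\le 0$, multiplied by nonnegative factors.
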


As an  application of the last lemma, we have the following proposition.
\begin{proposition}\label{prop:mono-sol}
Let ${\cal A}$ be a strong M-tensor and $b^{(1)},  b^{(2)}\in \mathbb R^n$ satisfy $b^{(2)} \ge  b^{(1)}$.
Suppose that  the M-tensor equation
\begin{equation}\label{temp-g1}
{\cal A}x^{m-1}-b^{(1)}=0
\end{equation}
has a nonnegative solution $x^{(1)}$.  Then the M-tensor equation
\begin{equation}\label{temp-g2}
{\cal A}x^{m-1}-b^{(2)}=0
\end{equation}
has a nonnegative solution $x^{(2)}$ satisfying $x^{(2)}\ge x^{(1)}$.
In particular, if $b^{(1)}>0$, then the unique positive solution $\bar x^{(1)}$ of {\rm(\ref{temp-g1})}
and the unique positive solution $\bar x^{(2)}$ of {\rm(\ref{temp-g2})} satisfies
$\bar x^{(2)}\ge \bar x^{(1)}$.
\end{proposition}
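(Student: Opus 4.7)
The plan is to reduce everything to Lemma \ref{lem-g} by exhibiting $x^{(1)}$ itself as a witness that the relevant sub-level set is nonempty.

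First I would form the set
\[
{\cal S}_2 \stackrel\triangle{=} \{x\in \mathbb R^n_+ \mid {\cal A}x^{m-1}-b^{(2)}\le 0\}
\]
associated with the second equation and observe that $x^{(1)}\in {\cal S}_2$. Indeed, since $x^{(1)}\ge 0$ solves ${\cal A}x^{m-1}=b^{(1)}$, the assumption $b^{(2)}\ge b^{(1)}$ gives
\[
{\cal A}(x^{(1)})^{m-1}-b^{(2)} = b^{(1)}-b^{(2)} \le 0,
\]
so ${\cal S}_2\ne \emptyset$. Lemma \ref{lem-g} then supplies a largest element $x^{(2)}\in {\cal S}_2$ which is simultaneously the largest nonnegative solution of (\ref{temp-g2}). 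By the maximality property and $x^{(1)}\in {\cal S}_2$, we immediately obtain $x^{(2)}\ge x^{(1)}$, which proves the first claim.

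For the ``in particular'' statement, note that $b^{(2)}\ge b^{(1)}>0$, so both equations fall under the first item of Theorem \ref{th:1} and admit unique positive solutions $\bar x^{(1)}$ and $\bar x^{(2)}$. Applying the argument just established with $x^{(1)}=\bar x^{(1)}$ produces a nonnegative solution $x^{(2)}$ of (\ref{temp-g2}) with $x^{(2)}\ge \bar x^{(1)}>0$; hence $x^{(2)}$ is in fact positive, and uniqueness forces $x^{(2)}=\bar x^{(2)}$, yielding $\bar x^{(2)}\ge \bar x^{(1)}$.

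There is no real obstacle: the only point worth checking carefully is the inclusion $x^{(1)}\in {\cal S}_2$, and the rest is a direct appeal to Lemma \ref{lem-g} together with the uniqueness clause of Theorem \ref{th:1}. In other words, the monotonicity of nonnegative solutions in $b$ is essentially a restatement of the fact that $x^{(1)}$ is already subfeasible for the perturbed right-hand side.
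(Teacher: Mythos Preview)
Your proof is correct and follows essentially the same route as the paper: both arguments hinge on Lemma \ref{lem-g} applied to ${\cal S}_2$, with the nonnegative solution of (\ref{temp-g1}) serving as a witness for nonemptiness. The paper additionally introduces ${\cal S}_1$ and uses the inclusion ${\cal S}_1\subseteq{\cal S}_2$, but your direct verification that $x^{(1)}\in{\cal S}_2$ is a harmless shortcut; likewise, your handling of the ``in particular'' clause via uniqueness of the positive solution is a minor variant of the paper's observation that the unique positive solution coincides with the largest element of each ${\cal S}_i$.
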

\begin{proof}
Define
\[
{\cal S}_1 \stackrel\triangle {=}\{x\in \mathbb R^n_+|\;{\cal A}x^{m-1}-b ^{(1)} \leq 0\}
\]
and
\[
{\cal S}_2 \stackrel\triangle {=}\{x\in \mathbb R^n_+|\;{\cal A}x^{m-1}-b ^{(2)} \leq 0\}.
\]
Since $b^{(1)}\le b^{(2)}$, we obviously have
\[
{\cal S}_1 \subseteq {\cal S}_2.
\]
By the assumption that (\ref{temp-g1}) has a nonnegative solution,
we claim  from Lemma \ref{lem-g} that the set ${\cal S}_1$ is  nonempty
and has a largest element $\bar x^{(1)}$ that is a solution to the equation (\ref{temp-g1}).
Consequently, the set ${\cal S}_2$ is  nonempty  and has a largest element $x^{(2)}$
that is a solution to the equation (\ref{temp-g2}). It is clear that
\[
x^{(2)}\ge \bar x^{(1)}\ge x^{(1)}.
\]
If $b^{(1)}>0$, then the unique positive solution $\bar x^{(1)}$ is the largest element of
${\cal S}_1$ and  $\bar x^{(2)}$ is the largest element of
${\cal S}_2$. As a result, we have $\bar x^{(2)}\ge \bar x^{(1)}$.
The proof is complete.
\end{proof}

\begin{theorem}\label{th:sol}
Suppose that $\cal A$ is a strong M-tensor. Then the following statements are true.
\begin{itemize}
\item [{\rm(i)}] The tensor equation
\begin{equation}\label{eqn:homo}
{\cal A}x^{m-1}=0
\end{equation}
has a unique solution $x=0$.
\item [{\rm(ii)}] If $-b\in \mathbb R_+^n\backslash \{0\}$, then the M-Teq {\rm(\ref{eqn:M-teq})}
has no nonnegative solutions.

\item [{\rm(iii)}] The following relationship holds
\[
x\circ {\cal A}x^{m-1}=0\qquad \Longleftrightarrow\qquad x=0.
\]
\item [{\rm(iv)}]
It holds that
\[
\lim_{\|x\|\to\infty }\|{\cal A}x^{m-1}\|=+\infty.
\]
\item [{\rm(v)}] For any $b\in \mathbb R^n$, the solution set of the M-tensor equation {\rm(\ref{eqn:M-teq})},
if not empty, is bounded.
\end{itemize}
\end{theorem}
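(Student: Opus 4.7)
The plan is to handle the five items in the listed order, using Lemma~\ref{lem-g} and Proposition~\ref{prop:mono-sol} as the main tools and feeding earlier items into later ones.

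For (i), I would apply Lemma~\ref{lem-g} with $b=0$. Since $0$ itself satisfies ${\cal A}\,0^{m-1}=0\le 0$, the set ${\cal S}=\{x\in\mathbb R^n_+:{\cal A}x^{m-1}\le 0\}$ is nonempty, so it has a largest element $\bar x\ge 0$ solving ${\cal A}\bar x^{m-1}=0$. To rule out $\bar x\ne 0$ and also to handle sign-indefinite solutions, I would use the standard spectral fact that every H-eigenvalue of a strong M-tensor is positive: any nonzero real solution $x$ of ${\cal A}x^{m-1}=0$ would present $0$ as an H-eigenvalue, a contradiction. (Alternatively, a direct argument using $\mathcal{A}=s\mathcal{I}-\mathcal{B}$ and comparing $|x|$ with $\bar x$ via Lemma~\ref{lem-g} gives the same conclusion.)

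Statement (ii) then follows by contradiction. If $x\ge 0$ solves ${\cal A}x^{m-1}=b$ with $-b\in\mathbb R^n_+$, then $x\in{\cal S}$, so by Lemma~\ref{lem-g} and part (i) we get $x\le \bar x=0$, hence $x=0$, which forces $b={\cal A}0^{m-1}=0$, contradicting $b\ne 0$. For (iii), the $(\Leftarrow)$ direction is trivial. For $(\Rightarrow)$, write ${\cal A}=s{\cal I}-{\cal B}$ with ${\cal B}\ge 0$, $s>\rho({\cal B})$. The coordinatewise identity $x_i({\cal A}x^{m-1})_i=0$ implies $sx_i^{m-1}=({\cal B}x^{m-1})_i$ whenever $x_i\ne 0$, and is trivially consistent when $x_i=0$. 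Taking absolute values and using ${\cal B}\ge 0$ together with the triangle inequality,
\[
s|x_i|^{m-1}=|sx_i^{m-1}|=\bigl|({\cal B}x^{m-1})_i\bigr|\le ({\cal B}|x|^{m-1})_i
\]
at every coordinate (the case $x_i=0$ being obvious). Hence ${\cal A}|x|^{m-1}\le 0$, so $|x|\in{\cal S}$ with ${\cal S}$ as in the proof of (i); by Lemma~\ref{lem-g} and part (i) we conclude $|x|\le 0$, i.e., $x=0$. The main obstacle in the whole theorem is precisely handling this absolute-value step in the tensor setting, since the parity of $m-1$ and the sign of $x_i$ interact; the fact that we only need $|sx_i^{m-1}|$ rather than $sx_i^{m-1}$ itself is what keeps the bound clean.

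For (iv), I would argue by contradiction. If $\|{\cal A}x^{m-1}\|$ stays bounded along some sequence with $\|x^{(k)}\|\to\infty$, set $y^{(k)}=x^{(k)}/\|x^{(k)}\|$. Homogeneity gives
\[
\|{\cal A}(y^{(k)})^{m-1}\|=\frac{\|{\cal A}(x^{(k)})^{m-1}\|}{\|x^{(k)}\|^{m-1}}\to 0,
\]
and passing to a convergent subsequence yields a limit $y^*$ with $\|y^*\|=1$ and ${\cal A}(y^*)^{m-1}=0$, contradicting (i). Finally, (v) is immediate from (iv): on any solution set, $\|{\cal A}x^{m-1}\|=\|b\|$ is constant, so the set must be bounded, otherwise the limit in (iv) would be finite.
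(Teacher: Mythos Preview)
Your argument is correct throughout; parts (i), (iv) and (v) match the paper essentially verbatim (the paper also reduces (i) to the fact that $0$ cannot be an H-eigenvalue of a strong M-tensor, and proves (iv) by the same normalization/compactness argument). The differences are in (ii) and (iii).

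For (ii), the paper does not go through Lemma~\ref{lem-g}. Instead it takes a putative nonnegative solution $\bar x$, sets $I=\{i:\bar x_i>0\}$, and builds a diagonal tensor ${\cal D}$ with $d_{i\cdots i}=-b_i\bar x_i^{-(m-1)}$ on $I$ so that $\bar{\cal A}={\cal A}+{\cal D}$ is still a strong M-tensor and $\bar{\cal A}_I\bar x_I^{m-1}=0$, contradicting (i) applied to the principal subtensor $\bar{\cal A}_I$. Your route is shorter and more direct: once (i) forces the largest element of $\{x\ge 0:{\cal A}x^{m-1}\le 0\}$ to be $0$, any nonnegative solution with $b\le 0$ is trapped at $0$. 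This avoids the auxiliary tensor construction entirely.

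For (iii), the paper's proof is a one-liner: restrict to $I=\{i:x_i\ne 0\}$, observe that $({\cal A}x^{m-1})_i=({\cal A}_I x_I^{m-1})_i$ for $i\in I$ because the other coordinates of $x$ vanish, and invoke (i) for the strong M-subtensor ${\cal A}_I$. Your absolute-value argument via ${\cal A}=s{\cal I}-{\cal B}$ is also valid and handles the sign/parity issue cleanly, but it is more computational and leans on Lemma~\ref{lem-g} again. The paper's principal-subtensor approach is more structural and reusable (the same idea reappears later in the paper), while yours has the advantage of being completely self-contained once Lemma~\ref{lem-g} is in hand.
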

\begin{proof}
Conclusion (i) is trivial because zero is not an eigenvalue of any strong M-tensor.

(ii) Suppose for some $b \le 0$, $b\neq 0$,
the M-Teq (\ref{eqn:M-teq}) has a nonnegative solution $\bar x\neq 0$.
Clearly, $\bar x\ge 0$.
Denote $I=\{i:\bar{x}>0\}$.
Let ${\cal D}$
be a diagonal tensor whose diagonals are
$d_{i\cdots i}=  -b_i \bar x_i^{-(m-1)}$, $\forall i \in I$, and
$d_{i\cdots i}=  0$, $\forall i\notin I$.
Let $\bar {\cal A} ={\cal A}+{\cal D}$. It is obvious that
$\bar {\cal A}$ is a strong M-tensor. Clearly, $\bar {\cal A}_I$ is a strong M-tensor too. However, it holds that
$\bar {\cal A}_I \bar x_I^{m-1}=0$, which yields  a contradiction.

Conclusion (iii) follows from (i) directly  because any principal subtensor of a strong M-tensor
 is a strong M-tensor.

(iv) Suppose on the contrary that there is some sequence $\{x_k\}$ satisfying\\ $\lim_{k\to\infty}\|x_k\|=+\infty$
such that the sequence $\{\|{\cal A}x_k^{m-1}\|\}$ is bounded.
Then we have
\[
\lim _{k\to\infty}\frac {\|{\cal A}x_k^{m-1}\|}{\|x_k\|^{m-1}}=0.
\]
Let $y_k=x_k/\|x_k\|$ and $\bar y$ be an accumulation point of the sequence $\{y_k\}$.
It is easy to see that $\bar y\neq 0$ but ${\cal A}\bar y^{m-1}=0$, which contradicts with (i).

The conclusion (v) is a direct corollary of the conclusion (iv).
\end{proof}

The latter part of this section focuses on the M-Teq (\ref{eqn:M-teq}) with $b \in \mathbb R^n_+$.
We denote
\[
I^+(b)=\{i:\; b_i>0\},\quad\mbox{and}\quad  I^0(b)=\{i:\; b_i=0\}.
\]

We first show the following theorem.

\begin{theorem}\label{th:positive-sol}
Suppose that $\cal A$ is irreducible and is a strong M-tensor. Then
every nonnegative solution of the M-Teq {\rm(\ref{eqn:M-teq})} with $b \in \mathbb R^n_+$ must be positive.
\end{theorem}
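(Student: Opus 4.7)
The plan is a proof by contradiction exploiting the sign pattern of the off-diagonal entries of an M-tensor together with the irreducibility hypothesis. Suppose $\bar x\ge 0$ solves ${\cal A}x^{m-1}=b$ but is not strictly positive, and set $J=\{i:\bar x_i=0\}$ and $J^c=\{i:\bar x_i>0\}$. By assumption $J$ is nonempty. The degenerate possibility $\bar x=0$ can be dismissed at once: it would force $b=0$, in which case $\bar x=0$ is the only nonnegative solution by Theorem \ref{th:sol}(i). So we may assume $J^c$ is nonempty as well, i.e., $(J,J^c)$ is a nontrivial partition of $[n]$.

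The key step is to read off the $i$th scalar equation for an arbitrary $i\in J$. Since $\bar x_j=0$ for every $j\in J$, every monomial in $({\cal A}\bar x^{m-1})_i$ that involves some index $i_k\in J$ vanishes, so the equation collapses to
\[
b_i=\sum_{i_2,\ldots,i_m\in J^c}a_{ii_2\ldots i_m}\,\bar x_{i_2}\cdots \bar x_{i_m}.
\]
For every such multi-index, $i\in J$ and $i_2\in J^c$ imply that $(i,i_2,\ldots,i_m)$ is an off-diagonal position; writing ${\cal A}=s{\cal I}-{\cal B}$ with ${\cal B}\ge 0$ gives $a_{ii_2\ldots i_m}\le 0$. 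Each product $\bar x_{i_2}\cdots \bar x_{i_m}$ is strictly positive because every factor lies in $J^c$, and $b_i\ge 0$ by hypothesis. Hence the displayed equality can hold only if $b_i=0$ and $a_{ii_2\ldots i_m}=0$ for every $i_2,\ldots,i_m\in J^c$.

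Since $i\in J$ was arbitrary, this says precisely that ${\cal A}$ is reducible with respect to the nonempty proper subset $J\subset[n]$ in the extended sense defined earlier, contradicting the irreducibility of ${\cal A}$. The argument is essentially a one-shot application of the M-tensor sign pattern; the only point calling for care is cleanly separating the indices according to the partition $(J,J^c)$ so that only off-diagonal coefficients survive in the reduced sum, together with the aside dispatching the degenerate solution $\bar x=0$.
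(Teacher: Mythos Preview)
Your argument is correct and mirrors the paper's proof almost exactly: both set $I=\{i:\bar x_i=0\}$, collapse the $i$th equation for $i\in I$ to a sum over indices in $I_c$, and use the nonpositivity of off-diagonal entries together with the strict positivity of the surviving products $\bar x_{i_2}\cdots\bar x_{i_m}$ to force $b_i=0$ and $a_{ii_2\ldots i_m}=0$ for all $i_2,\ldots,i_m\notin I$, contradicting irreducibility. One small remark: your dismissal of the case $\bar x=0$ does not actually produce a contradiction (it merely shows $b=0$, for which the statement as literally written fails since $0$ is then a nonnegative but non-positive solution); the paper's proof simply glosses over this degenerate case, so presumably $b\ne 0$ is tacitly assumed throughout.
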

\begin{proof}
Suppose on the contrary that the M-Teq (\ref{eqn:M-teq}) with $b \in \mathbb R^n_+$ has a nonnegative
solution $\bar x$ satisfying $I=\{i\;|\; \bar x_i =0\}\neq \emptyset$.
We have for any $i\in I$,
\[
0= \sum_{i_2, \ldots, i_m}a_{i i_2\ldots i_m}\bar x_{i_2}\cdots \bar x_{i_m}-b_i
=\sum_{\{i_2,\ldots, i_m\}\subseteq I_c }a_{i i_2\ldots i_m}\bar x_{i_2}\cdots \bar x_{i_m}-b_i \le 0.
\]
Since $\bar x_j>0$, $\forall j\in I_c$, the last inequality yields $b_i=0$ and
\[
a_{i i_2\ldots i_m}=0,\quad\forall i_2,\ldots, i_m\not\in  I.
\]
It shows that tensor $\cal A$  is reducible with respect to $I$, which yields a contradiction.
\end{proof}

By the proof of the last theorem, we have the following corollary.
\begin{corollary}\label{co-2}
Suppose that $\cal A$ is a strong M-tensor. If the M-Teq {\rm(\ref{eqn:M-teq})} with $b \in \mathbb R^n_+$
has a nonnegative $\bar x$ with zero elements, then $\cal A$ is reducible with respective some $I\subseteq I^0(b)$.
\end{corollary}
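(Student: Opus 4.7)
The plan is to recycle the computation inside the proof of Theorem~\ref{th:positive-sol}, noting that that argument actually produces two useful facts before it invokes irreducibility to derive a contradiction: it shows $b_i = 0$ for every $i$ on which $\bar x$ vanishes, and it shows that $\cal A$ has the vanishing block pattern required for reducibility with respect to that same index set. The corollary is exactly the conclusion of those intermediate steps, now read off without the final contradiction.

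First I would set $I = \{i : \bar x_i = 0\}$ (nonempty by hypothesis) and $I_c = [n]\setminus I$. For each $i \in I$, the $i$-th component of (\ref{eqn:M-teq}) gives
\[
b_i \;=\; \sum_{i_2,\ldots,i_m\in I_c} a_{i\,i_2\ldots i_m}\bar x_{i_2}\cdots \bar x_{i_m},
\]
since any summand containing a factor $\bar x_{i_k}$ with $i_k \in I$ vanishes. Next I would invoke the M-tensor decomposition ${\cal A} = s{\cal I} - {\cal B}$ with ${\cal B}\ge 0$, which makes every entry of $\cal A$ whose indices are not all equal nonpositive. Because $i \in I$ while $i_2,\ldots,i_m \in I_c$, the coefficient $a_{i\,i_2\ldots i_m}$ has $i \notin \{i_2,\ldots,i_m\}$ and is therefore nonpositive. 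Combined with $\bar x_{i_k} > 0$ on $I_c$, this makes the right-hand side nonpositive, while the left-hand side $b_i$ is nonnegative by hypothesis.

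Equality then forces $b_i = 0$ for every $i \in I$ (so $I \subseteq I^0(b)$) and simultaneously makes each summand on the right vanish; positivity of the $\bar x_{i_k}$ on $I_c$ then forces $a_{i\,i_2\ldots i_m} = 0$ for all $i \in I$ and all $i_2,\ldots,i_m \in I_c$, which is exactly reducibility of $\cal A$ with respect to $I$. I do not expect any real obstacle: the only conceptual ingredient beyond the calculation already performed in the previous proof is the observation about signs of non-diagonal entries of an M-tensor, which is immediate from the decomposition $\mathcal A = s\mathcal I - \mathcal B$.
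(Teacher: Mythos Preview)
Your proposal is correct and follows exactly the route the paper intends: the corollary is stated immediately after Theorem~\ref{th:positive-sol} with the remark that it follows ``by the proof of the last theorem,'' and you have faithfully extracted the two intermediate conclusions ($b_i=0$ and $a_{i\,i_2\ldots i_m}=0$ for $i\in I$, $i_2,\ldots,i_m\in I_c$) from that argument. The only addition is that you make explicit the sign reasoning (off-diagonal entries of ${\cal A}=s{\cal I}-{\cal B}$ are nonpositive) which the paper's proof of Theorem~\ref{th:positive-sol} hides inside the bare assertion ``$\le 0$''; this is a clarification, not a departure.
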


The following theorem characterizes a nonnegative solution of the M-Teq (\ref{eqn:M-teq}) with $b \in \mathbb R^n_+$.

\begin{theorem}\label{th:block}
Suppose that $\cal A$ is a strong M-tensor and $b \in \mathbb R^n_+$.
Then the M-Teq  {\rm(\ref{eqn:M-teq})}
has a nonnegative solution with zero elements if and only if $\cal A$ is reducible with respect to some
$I\subseteq I^0(b)$. Moreover, for a nonnegative solution $\bar x$ of the M-Teq  {\rm(\ref{eqn:M-teq})},
$\bar x_i=0$ iff $i\in I$.
\end{theorem}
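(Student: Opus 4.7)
The theorem bundles together Corollary \ref{co-2} and its converse, so my plan splits into the two directions of the ``iff'' plus a treatment of the ``moreover'' clause. The \emph{only if} direction is a direct restatement of Corollary \ref{co-2}: if $\bar x$ is a nonnegative solution with nonempty zero set $I := \{i : \bar x_i = 0\}$, the corollary forces $\cal A$ to be reducible with respect to this $I$, and $I \subseteq I^0(b)$. This also handles the ``$\bar x_i = 0 \Rightarrow i \in I$'' half of the moreover clause tautologically.

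For the \emph{if} direction, given $I \subseteq I^0(b)$ such that $\cal A$ is reducible with respect to $I$, I would partition $[n] = I \cup I_c$ and build $\bar x$ blockwise. The principal subtensor ${\cal A}_{I_c}$ inherits the strong M-tensor property, and $b_{I_c} \in \mathbb R^{|I_c|}_+$, so Theorem \ref{th:1}(ii) supplies a nonnegative solution $y^*$ of the reduced equation ${\cal A}_{I_c} y^{m-1} = b_{I_c}$. Setting $\bar x_I = 0$ and $\bar x_{I_c} = y^*$, the verification that $\bar x$ solves the full M-Teq splits into two routine cases: for $i \in I$, every term in $\sum_{i_2,\ldots,i_m}a_{i\,i_2\cdots i_m}\bar x_{i_2}\cdots \bar x_{i_m}$ with some $i_k \in I$ vanishes because $\bar x_{i_k} = 0$, while every term with all $i_k \in I_c$ vanishes by reducibility, so the sum equals $0 = b_i$; for $i \in I_c$, only terms with all $i_k \in I_c$ survive and reassemble to $({\cal A}_{I_c} y^{*\,m-1})_i = b_i$.

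The delicate point, and the main obstacle, is the remaining half of the \emph{moreover} clause, which pins the zero set of $\bar x$ down to exactly $I$ --- i.e., it demands $y^* > 0$, not merely $y^* \ge 0$. To resolve this I would choose $I$ to be \emph{maximal} in the inclusion order among subsets of $I^0(b)$ with respect to which $\cal A$ is reducible. If the constructed $y^*$ had a zero component, applying Corollary \ref{co-2} to the reduced equation on $I_c$ would produce a nonempty $I'' \subseteq I^0(b_{I_c})$ such that ${\cal A}_{I_c}$ is reducible with respect to $I''$; a direct check of the vanishing pattern, splitting into the cases $i_1 \in I$ (handled by reducibility of $\cal A$ with respect to $I$) and $i_1 \in I''$ (handled by reducibility of ${\cal A}_{I_c}$ with respect to $I''$), shows that $\cal A$ is then reducible with respect to $I \cup I'' \subseteq I^0(b)$, strictly enlarging $I$ and contradicting maximality. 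Hence $y^* > 0$, and the zero set of $\bar x$ is exactly $I$, closing the moreover clause.
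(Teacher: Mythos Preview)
Your proof is correct and follows the same overall strategy as the paper: the ``only if'' direction is Corollary~\ref{co-2}, and the ``if'' direction builds a solution blockwise by solving the reduced M-Teq on $I_c$. The one difference is how the ``moreover'' clause is secured. The paper argues iteratively: starting from $I^1$, if the subtensor ${\cal A}_{I_c^1}$ is irreducible it invokes Theorem~\ref{th:positive-sol} to conclude positivity of the reduced solution, and otherwise enlarges to $I^1\cup I^2$ and repeats. You instead take $I$ maximal up front and apply Corollary~\ref{co-2} in contrapositive to the reduced equation. These are equivalent---your maximal $I$ is precisely where the paper's iteration terminates---but your formulation is a bit cleaner: it avoids the paper's slightly loose dichotomy (``${\cal A}_{I_c^1}$ irreducible, otherwise reducible with respect to some $I^2\subseteq I^0(b_{I_c^1})$''), which as written does not follow from mere reducibility and really needs Corollary~\ref{co-2} behind it, exactly as you use it.
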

\begin{proof} The ``only if" part follows from Corollary \ref{co-2} directly.

Suppose  that tensor $\cal A$  is reducible with respect to some $I^1\subseteq I^0(b)$.
It is easy to see that the M-tensor equation (\ref{eqn:M-teq}) has a solution $\bar x$ with $\bar x_{I^1}=0$.
Denote $I^1_c=[n]\backslash I^1$.
Consider the lower dimension M-tensor equation
\begin{equation}\label{temp:th-block}
{\cal A}_{I^1_c}x_{I^1_c}^{m-1}-b_{I^1_c}=0.
\end{equation}
Since $b_{I^1_c}\ge 0$, the last equation has a nonnegative solution $\bar x_{I_c^1}$, which together with $\bar x_{I^1}=0$
forms a nonnegative solution to the M-tensor equation (\ref{eqn:M-teq}).

If ${\cal A}_{I^1_c}$ is irreducible, then $I=I^1$ is the desired index set.
Otherwise,  ${\cal A}_{I^1_c}$ is reducible
with respect to some  $I^2\subset I_c^1$ satisfying $I^2\subseteq b^0_{I_c^1}$. We consider the lower dimensional M-tensor equation (\ref{temp:th-block}).
Following a similar discussion to the above process, we can get another lower dimensional tensor equation
whose nonnegative solution together with some zeros forms a nonnegative solution to (\ref{eqn:M-teq}).
Continuing this process finitely many times, we can get a desirable index set $I\subset I^0(b)$.
\end{proof}

\begin{remark}
The above theorem provides a way to reduce the size of an M-tensor equation with $b\in \mathbb R^n_+$. Specifically,
in the case
tensor $\cal A$  is reducible with respect to some $I\subseteq I^0(b)$,
we can get a solution to (\ref{eqn:M-teq}) by finding  a positive solution to the lower
dimensional tensor equation
\[
{\cal A}_{I_c}x_{I_c}^{m-1}-b_{I_c}=0,
\]
where $I_c=[n]\backslash I$.
\end{remark}

As a direct corollary of the last theorem, we have the following results, which gives
a necessary and sufficient condition for the M-tensor equation (\ref{eqn:M-teq}) with $b\in \mathbb R^n_+$
to have  a positive solution.
\begin{corollary}\label{cor-1}
Suppose that $\cal A$ is a strong M-tensor  and  $b \in \mathbb R^n_+$. Then
there is an index set $I\subseteq I^0(b)$ such that every nonnegative solution to the
following lower dimensional tensor equation with $I_c=[n]\backslash I$
\[
{\cal A}_{I_c}x_{I_c}^{m-1}-b_{I_c}=0
\]
is positive. Moreover, the positive solution $x_{I_c}$ of the last equation together with
$x_{I}=0$ forms a nonnegative solution to the M-Teq {\rm(\ref{eqn:M-teq})}.
\end{corollary}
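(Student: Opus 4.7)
The plan is to derive the corollary as a direct consequence of Theorem~\ref{th:block}. I would distinguish two cases. If the M-Teq (\ref{eqn:M-teq}) admits no nonnegative solution with a zero coordinate, take $I=\emptyset$ and $I_c=[n]$: the ``lower-dimensional'' equation then coincides with the original, the claim that every nonnegative solution is positive holds by assumption (vacuously, if no nonnegative solution exists at all), and the final clause is trivial.

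In the complementary case, Theorem~\ref{th:block} furnishes an index set $I\subseteq I^{0}(b)$ with respect to which $\cal A$ is reducible, and such that every nonnegative solution $\bar x$ of (\ref{eqn:M-teq}) satisfies $\bar x_i=0$ exactly when $i\in I$. I would then show that any nonnegative solution $y$ of the lower-dimensional equation
\[
{\cal A}_{I_c}x_{I_c}^{m-1}-b_{I_c}=0
\]
lifts to a nonnegative solution $\bar x$ of (\ref{eqn:M-teq}) via $\bar x_I=0$, $\bar x_{I_c}=y$. For $i\in I$, expanding
\[
({\cal A}\bar x^{m-1})_i=\sum_{i_2,\ldots,i_m}a_{i\,i_2\ldots i_m}\bar x_{i_2}\cdots\bar x_{i_m}
\]
and using $\bar x_I=0$ retains only summands with $i_2,\ldots,i_m\in I_c$; these coefficients all vanish by reducibility of $\cal A$ with respect to $I$, and combined with $b_i=0$ (since $I\subseteq I^{0}(b)$), the $I$-block of (\ref{eqn:M-teq}) holds. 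For $i\in I_c$, the same substitution collapses the sum to $({\cal A}_{I_c}y^{m-1})_i=b_i$ by the choice of $y$.

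Invoking the ``moreover'' clause of Theorem~\ref{th:block} on the lifted solution $\bar x$ then forces $y=\bar x_{I_c}>0$; hence every nonnegative solution of the lower-dimensional equation is strictly positive, which is the main assertion. The final claim of the corollary is then exactly the lifting construction. The only real technical point is the lifting itself: the payoff of Theorem~\ref{th:block}'s reducibility conclusion is precisely that setting $\bar x_I=0$ makes the $I$-block of ${\cal A}\bar x^{m-1}$ vanish automatically, so no further analysis is needed beyond careful index bookkeeping.
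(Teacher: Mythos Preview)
Your proof is correct and matches the paper's intent: the paper gives no explicit proof, stating only that the result is ``a direct corollary of the last theorem'' (Theorem~\ref{th:block}), and your argument spells out precisely how that deduction goes---case split on whether a nonnegative solution with a zero entry exists, invoke Theorem~\ref{th:block} to obtain $I$, verify the lifting via reducibility, and then apply the ``moreover'' clause to force positivity on $I_c$.
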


The following lemma gives another interesting property for an M-Teq.
\begin{lemma}\label{lm:block}
Suppose that ${\cal A}$ is a strong M-tensor and $b \in \mathbb R^n_+$.
Suppose further that every nonnegative solution of the M-Teq {\rm(\ref{eqn:M-teq})} is positive.
 Then there is an index set $J\subseteq I^0(b)$ such that
for each $i\in J$, there are at least one $i_j\not\in J$, $2\le j\le m$ such that
$a_{i i_2\ldots i_m}\neq 0$.
\end{lemma}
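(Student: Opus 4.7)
The plan is to translate the solution-based hypothesis into a structural statement about $\cal A$ via Theorem~\ref{th:block}, and then construct $J$ explicitly. Specifically, the contrapositive of Theorem~\ref{th:block}, together with the assumption that every nonnegative solution of the M-Teq~(\ref{eqn:M-teq}) is positive, says that $\cal A$ is not reducible with respect to any non-empty $I \subseteq I^0(b)$. If $I^0(b) = \emptyset$ then the choice $J = \emptyset$ makes the conclusion vacuously true, so I focus on the case $I^0(b) \neq \emptyset$.

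My candidate set would be
\[
J := \bigl\{\, i \in I^0(b) \,:\, \exists (i_2,\ldots,i_m) \text{ with some } i_j \in I^+(b) \text{ and } a_{i i_2 \ldots i_m} \neq 0 \,\bigr\}.
\]
The desired property falls out automatically for this $J$: given $i \in J$, the witnessing tuple $(i_2,\ldots,i_m)$ has some $i_j \in I^+(b)$, and since $J \subseteq I^0(b)$ is disjoint from $I^+(b)$, that $i_j$ lies outside $J$.

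The only real content is the non-emptiness of $J$ (without it the lemma would have no substance). I would obtain this by applying the non-reducibility statement directly to $I = I^0(b)$: this produces some $i_1 \in I^0(b)$ and indices $i_2,\ldots,i_m$ all outside $I^0(b)$, hence all in $I^+(b)$, with $a_{i_1 i_2 \ldots i_m} \neq 0$. This particular $i_1$ clearly satisfies the defining condition of $J$, so $J \neq \emptyset$.

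I expect the main conceptual obstacle to be recognizing the correct choice of $J$: the naive guess $J = I^0(b)$ can fail when some indices in $I^0(b)$ couple only among themselves, while non-reducibility w.r.t.\ $I^0(b)$ is witnessed by a \emph{different} index in $I^0(b)$. The construction above filters $I^0(b)$ down to precisely those indices whose slices ``reach into'' $I^+(b)$. An alternative, more algorithmic proof is to start from $J_0 = I^0(b)$ and iteratively delete any $i \in J_k$ whose entire nonzero slice is supported in $J_k$; this monotone shrinking terminates, and the final $J$ cannot collapse to $\emptyset$ because a putative terminal step $J = \{i\}$ with $i$ ``bad'' would force $a_{i i_2 \ldots i_m} = 0$ whenever some $i_j \neq i$, contradicting non-reducibility of $\cal A$ with respect to the singleton $\{i\} \subseteq I^0(b)$.
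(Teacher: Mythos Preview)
Your argument is correct. The explicit choice
\[
J = \bigl\{\, i \in I^0(b) : \exists\,(i_2,\ldots,i_m)\ \text{with some}\ i_j \in I^+(b)\ \text{and}\ a_{ii_2\ldots i_m}\neq 0 \,\bigr\}
\]
works in one step: membership in $J$ directly supplies a witness $i_j \in I^+(b) \subseteq [n]\setminus J$, and non-emptiness follows from non-reducibility of $\cal A$ with respect to $I^0(b)$ exactly as you say (that negation indeed yields some $i_1\in I^0(b)$ with \emph{all} of $i_2,\ldots,i_m$ in $I^+(b)$).

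The paper takes a different, iterative route. Starting from $J_0 = I^0(b)$, it sets $J_{k+1}$ equal to the set of \emph{bad} indices in $J_k$, i.e., those $i$ for which every nonzero $a_{ii_2\ldots i_m}$ has all indices inside $J_k$. Non-reducibility with respect to each $J_k \subseteq I^0(b)$ guarantees at least one good index at every stage, so $J_{k+1} \subsetneq J_k$ strictly, and the descent terminates at a nonempty set in which every index is good. Note this is the \emph{opposite} of your ``alternative algorithmic'' sketch, which keeps the good indices and deletes the bad ones; in fact, since good-with-respect-to-$J_k$ implies good-with-respect-to any subset, your deletion scheme stabilises after a single pass and recovers exactly your explicit $J$. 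The paper's keep-the-bad scheme genuinely iterates and in general outputs a different (but equally valid) set.

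Your construction is more direct. One small point: your justification for non-collapse in the alternative sketch appeals only to the singleton case, which is fine if deletions are done one at a time, but the cleaner argument (which you essentially already have for the main construction) is that if \emph{every} index in some nonempty $J_k\subseteq I^0(b)$ were bad, then $\cal A$ would be reducible with respect to $J_k$, contradicting Theorem~\ref{th:block}.
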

\begin{proof}
Let $J_0= I^0(b)$. It is easy to see that
there must be at least one $i\in J_0$ and at least one $i_j\not\in J_0$, $2\le j\le m$ such that
$a_{i i_2\ldots i_m}\neq 0$. Otherwise, the M-Teq has a nonnegative solution $\tilde x$ with $\tilde x_i=0$, $\forall i\in J_0$,
with yields a contradiction.

If $J_0$ does not meet the requirement,
we get an index set $J_1\subset J_0$ consisting of all indices $i\in J_0$ that does not meet the requirement.
If $J_1$ still is not the desired index set, we can further get a small index set
$J_2\subset J_1$. We proceed the process. At last, we get the desired index $J$.
\end{proof}

Based on the last lemma, we can show the nonsingularity property of the Jacobian $F'$ at the
positive solutions.

\begin{theorem}
Suppose that the M-Teq {\rm(\ref{eqn:M-teq})} with a strong M-tensor ${\cal A}$ and $b \in \mathbb R^n_+$ has
a positive solution  $\bar{x}$. Then the Jacobian $F'(\bar x)$ is a nonsingular M-matrix.
Let $f(y)=F(y^{[\frac{1}{m-1}]})$ and $\bar{y}={\bar x}^{[m-1]}$.
Then $f'(\bar y)$ is also a nonsingular M-matrix.
\end{theorem}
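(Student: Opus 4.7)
The plan is to verify the Z-matrix structure of $F'(\bar x)$, promote it to an M-matrix via Euler's identity, and then rule out singularity through a left-Perron-Frobenius argument that forces a reducibility relation violating Theorem~\ref{th:block}. Using $\mathcal{A}=s\mathcal{I}-\mathcal{B}$ with $\mathcal{B}\ge 0$, the identity tensor contributes only to the diagonal of $F'(\bar x)$, so each off-diagonal entry $F'(\bar x)_{ij}$ (for $i\ne j$) is minus a nonnegative combination of entries of $\mathcal{B}$ evaluated at the positive vector $\bar x$; hence $F'(\bar x)$ is a Z-matrix. Euler's identity applied to the $(m-1)$-homogeneous map $\mathcal{A}x^{m-1}$ at $\bar x$ gives $F'(\bar x)\bar x=(m-1)\mathcal{A}\bar x^{m-1}=(m-1)b\ge 0$, so $(F'(\bar x)+\epsilon I)\bar x=(m-1)b+\epsilon\bar x>0$ for every $\epsilon>0$. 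The Z-matrix characterization in Theorem~\ref{th:1} then makes each $F'(\bar x)+\epsilon I$ a nonsingular M-matrix, and letting $\epsilon\to 0^+$ shows $F'(\bar x)$ is an M-matrix; write $F'(\bar x)=cI-B$ with $B\ge 0$ and $c\ge\rho(B)$.

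The hard step is to prove $c>\rho(B)$. I would assume for contradiction that $c=\rho(B)$. The Perron-Frobenius theorem for $B\ge 0$ produces a nonzero nonnegative left eigenvector $w$ with $w^TB=cw^T$, equivalently $w^TF'(\bar x)=0$. Right-multiplying by $\bar x$ and invoking Euler gives $0=w^TF'(\bar x)\bar x=(m-1)w^Tb$; since $w,b\ge 0$, this forces $T:=\mathrm{supp}(w)\subseteq I^0(b)$ with $T\ne\emptyset$. For any $i\notin T$ the identity $\sum_{j\in T}w_jF'(\bar x)_{ji}=0$, combined with $w_j>0$ and $F'(\bar x)_{ji}\le 0$ (Z-matrix, $i\ne j$), forces $F'(\bar x)_{ji}=0$ for every $j\in T$ and $i\notin T$. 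Translating back via $\bar x>0$ and $\mathcal{A}=s\mathcal{I}-\mathcal{B}$, one obtains $a_{jj_2\cdots j_m}=0$ whenever $j\in T$ and some $j_k\notin T$, i.e., $\mathcal{A}$ is reducible with respect to $T\subseteq I^0(b)$. The moreover clause of Theorem~\ref{th:block} then forces $\bar x_j=0$ for every $j\in T$, contradicting $\bar x>0$. Hence $c>\rho(B)$ and $F'(\bar x)$ is a nonsingular M-matrix.

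For $f'(\bar y)$, the chain rule applied to $f(y)=F(y^{[1/(m-1)]})$ yields $f'(\bar y)=F'(\bar x)D$, where $D=\frac{1}{m-1}\mathrm{diag}(\bar x^{[-(m-2)]})$ is a positive diagonal matrix. Right-multiplication by a positive diagonal preserves the Z-matrix pattern, so $f'(\bar y)$ is a Z-matrix; and since $F'(\bar x)$ is a nonsingular M-matrix, Theorem~\ref{th:1} provides some $u>0$ with $F'(\bar x)u>0$. Setting $v:=D^{-1}u>0$ gives $f'(\bar y)v=F'(\bar x)u>0$, and Theorem~\ref{th:1} again concludes that $f'(\bar y)$ is a nonsingular M-matrix. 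The main obstacle I anticipate is the combinatorial translation in the second paragraph, where one must carefully match the vanishing of the off-diagonal block $F'(\bar x)_{T,T^c}$ with an honest reducibility index set in the sense defined just before Theorem~\ref{th:block}; the remaining arguments are routine manipulations of Theorem~\ref{th:1} and standard Perron-Frobenius facts.
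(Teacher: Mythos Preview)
Your approach is genuinely different from the paper's and, once patched, cleaner. The paper never touches a Perron--Frobenius eigenvector: it invokes Lemma~\ref{lm:block} to extract a special index set $J\subseteq I^0(b)$ on which the off-diagonal block $A_{JI}$ of $F'(\bar x)=(m-1)\mathcal A\bar x^{m-2}$ has no zero rows, uses Euler's identity in block form to get $A_{JJ}\bar x_J=-A_{JI}\bar x_I>0$ (so $A_{JJ}$ is a nonsingular M-matrix), and then shows the Schur complement $A_{II}-A_{IJ}A_{JJ}^{-1}A_{JI}$ is again a Z-matrix satisfying an analogous relation, iterating until the residual index set sits inside $I^+(b)$. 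Your left-eigenvector argument collapses that recursion into a single contradiction.

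The obstacle, however, is not where you placed it. The translation from $F'(\bar x)_{T,T^c}=0$ to the tensor entries is fine: with semi-symmetry (which the paper also uses implicitly through $F'(\bar x)=(m-1)\mathcal A\bar x^{m-2}$) and $\bar x>0$ you indeed obtain the \emph{strong} vanishing $a_{jj_2\cdots j_m}=0$ whenever $j\in T$ and some $j_k\notin T$. The shaky step is the appeal to the ``moreover'' clause of Theorem~\ref{th:block}. That clause refers to a particular index set $I$ produced by the iterative construction in its proof; it does not assert that every nonnegative solution vanishes on every set with respect to which $\mathcal A$ happens to be reducible, so it does not by itself force $\bar x_T=0$ (and mere reducibility with respect to $T$ only yields the existence of \emph{some} nonnegative solution with zeros, which need not be $\bar x$). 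The fix uses what you already derived: the strong vanishing shows that for each $j\in T$ the equation $b_j=(\mathcal A\bar x^{m-1})_j$ collapses to $(\mathcal A_T\bar x_T^{m-1})_j=0$; since $\mathcal A_T$ is a principal subtensor of a strong M-tensor, Theorem~\ref{th:sol}(i) gives $\bar x_T=0$, the desired contradiction. With this correction your argument is complete, and your treatment of $f'(\bar y)=F'(\bar x)D$ with $D$ positive diagonal matches the paper's.
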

\begin{proof}

It is easy to derive for any $y>0$,
$$
f^{\prime}(y)={\cal {A}}(y^{[\frac{1}{m-1}]})^{m-2} \mbox{diag}(y^{[\frac{1}{m-1}-1)}).
$$
It shows that the nonsingularity of $f'(\bar y)$ is the same as
the nonsingularity of $F'(\bar x)$.

Let $J\subseteq I^0(b)$  be the index set specified by Lemma \ref{lm:block} and $I=[n]\backslash J$.
Write the Jacobian matrix $F'(\bar x)$ as
the block form
$$
F'(\bar{x})=(m-1){\cal A}\bar x^{m-2}=\left(\begin{array}{ll}
A_{I I} & A_{I J} \\
A_{J I} & A_{J J}
\end{array}\right).
$$
Since $\bar x$ is a positive solution of (\ref{eqn:M-teq}), it follows from Lemma \ref{lm:block}
that $A_{JI}$ has no zero rows.

That $\bar x$ is a solution to (\ref{eqn:M-teq}) yields $F'(\bar x)\bar x=(m-1)b$.
Writing it as block form, we get
\begin{equation}\label{temp:th}
\left \{\begin{array}{lll}
A_{II}\bar x_I+A_{IJ}\bar x_J= \displaystyle (m-1)b_I, \vspace{1mm} \\
A_{JI}\bar x_I+A_{JJ}\bar x_J= \displaystyle (m-1)b_J.
\end{array}\right.
\end{equation}
It follows from
the last equality of the above system that
$$
A_{J J} \bar{x}_{J}=(m-1)b_{J}-A_{J I} \bar{x}_{I}=-A_{J I} \bar{x}_{I}>0.
$$
Since $A_{JJ}$ is a Z-matrix, the last inequality implies that ${\cal A}_{JJ}$ is a nonsingular M-matrix.
It then suffices to show that the Schur complement $A_{I I}-A_{I J} A_{J J}^{-1} A_{J I}$
is a nonsingular M-matrix.

If $J=I^0(b)$, then $I=I^+(b)$.
We get from the first equality of (\ref{temp:th}),
$$
\left(A_{I I}-A_{I J} A_{J J}^{-1} A_{J I}\right) \bar{x}_{I}=(m-1)b_{I}>0.
$$
Clearly, matrix $A_{I I}-A_{I J} A_{J J}^{-1} A_{J I}$ is a Z-matrix.
Consequently, the last inequality shows that the Schur complement of $A_{JJ}$ is a
nonsingular M-matrix too. Therefore, $A=F'(\bar x)$ is a nonsingular M-matrix.

In the case $J\subset I^0(b)$,we denote $J_1=J$, $I_1=I$ and $A_1=A_{I_1 I_1}-A_{I_1 J_1}
A_{J_1 J_1}^{-1}A_{J_1 I_1}.$
Then to show $F'(\bar x)$ is a nonsingular M-matrix is equivalent to show that the lower
dimensional Z-matrix $A_1$ is a nonsingular M-matrix.
It is clear that $\bar x_{I_1}$ satisfies the lower dimensional system of linear equations
\[
A_1\bar x_{I_1}= (m-1)b_{I_1}.
\]

Similar to above arguments, we can get a partition $(I_2,J_2)$ to the index set $I_1$ that
possesses the same properties as $(I_1,J_1)$.
Repeat the process finitely many times, we can get $J_t=I^0(b_{I_{t-1}})$. As a result, we can
verify that $F'(\bar x)$ is a nonsingular M-matrix.
\end{proof}

\section{A Newton Method for M-Tensor Equation (\ref{eqn:M-teq}) with $b \in \mathbb R^n_{++}$}
\label{sec:3}

In this section, we propose a Newton method to find the unique positive  solution to
(\ref{eqn:M-teq}) with  $b \in \mathbb R^n_{++}$.
Throughout this section, without specification, we always suppose that the following
assumption holds.

\begin{assumption}
Tensor $\cal A$ is a semi-symmetric and  strong M-tensor, and $b \in \mathbb R^n_{++}$.
\end{assumption}

Recently, He, Ling, Qi and Zhou \cite{He-Ling-Qi-Zhou-18}
developed a Newton method for solving the M-Teq (\ref{eqn:M-teq}) with $b\in \mathbb{R}_{++}^n$.
By making a variable transformation $x=y^{[\frac {1}{m}]}$,
they formulated the equation to the following equivalent nonlinear equation:
\[
W(y)=D(y) \cdot F(y^{[\frac {1}{m}]})=D(y)\cdot {\cal A}\Big (y^{[\frac {1}{m}]}\Big ) ^{m-1} -D(y) \cdot b=0,
\]
where $D(y)=\mbox{diag }\Big (y_i^{\frac {1}{m}-1}\Big )$.
The above equation has some nice properties such as
the nonsingularity  of the Jacobian $W'(y)$ for any $y>0$.
In the case where  $\cal A$ is symmetric, the tensor equation (\ref{eqn:M-teq})
is the stationary equation
of the minimization problem
\[
\min \bar f(y)=  \frac 1m {\cal A}\Big (y^{[\frac {1}{m}]}\Big )^m-b^T\Big (y^{[\frac {1}{m}]}\Big )
\]
because the gradient of $\bar f(y)$ is
\[
\nabla \bar f(y) =  \frac 1m W(y)= \frac 1mD(y)\cdot \nabla f(y^{[\frac {1}{m}]}).
\]

In what follows, we propose a Newton method for finding the unique positive
solution of the M-Teq (\ref{eqn:M-teq}). Our idea to develop the Newton method
is similar to but different from that in \cite{He-Ling-Qi-Zhou-18}. Details are given below.

Since our purpose is to get a positive solution of the M-Teq (\ref{eqn:M-teq}),
we restrict $x\in \mathbb R^n_{++}$. Making a variable transformation
 $y=x^{[m-1]}$, we formulate the M-Teq  (\ref{eqn:M-teq}) as
\begin{equation}\label{def:f}
f(y)=F(y^{[\frac {1}{m-1}]}\Big )={\cal A}\Big (y^{[\frac {1}{m-1}]}\Big )^{m-1}-b=0.
\end{equation}
A direct computation gives
\[
f'(y)={\cal A}\Big (y^{[\frac {1}{m-1}]}\Big )^{m-2}\mbox{diag }\Big (y^{[\frac {1}{m-1}-1]}\Big ).
\]
It follows that
\[
f'(y)y={\cal A}\Big (y^{[\frac {1}{m-1}]}\Big )^{m-2}\mbox{diag }\Big (y_i^{[\frac {1}{m-1}-1]}\Big )y
    ={\cal A}\Big (y^{[\frac {1}{m-1}]}\Big )^{m-1}=f(y)+b.
\]

Clearly, the positive solutions of the M-Teq (\ref{eqn:M-teq})
are positive solutions of  the following nonlinear equation:
\begin{equation}\label{equiv-a}
E(y)\stackrel\triangle {=} \mbox{diag }\Big ( y^{[-1]} \Big ) f(y)= \Big ( y^{[-1]} \circ  f(y)\Big ) =0.
\end{equation}
The Jacobian of $E(y)$ is
\begin{eqnarray*}
E'(y) &=& \mbox{diag }\Big ( y^{[-1]} \Big ) f'(y) - \mbox{diag }(f(y)) \mbox{diag }\Big ( y^{[-2]} \Big )\\
    &=& \mbox{diag }\Big ( y^{[-1]} \Big )\Big [ f'(y) - \mbox{diag }(f(y)) \mbox{diag }\Big ( y^{[-1]} \Big )\Big ].
\end{eqnarray*}
It is a non-symmetric Z-matrix. For any $y>0$, it holds that
\begin{eqnarray*}
E'(y)y &=& \mbox{diag }\Big ( y^{[-1]} \Big ) \Big[ f'(y)y - \mbox{diag }(f(y)) \Big ( y^{[-1]} \Big )y\Big ] \\
    &=&  \mbox{diag }\Big ( y^{[-1]} \Big ) \Big[ {\cal A}\Big (y^{[\frac {1}{m-1}]}\Big )^{m-1} - f(y) \Big ]\\
    &=& \mbox{diag }\Big ( y^{[-1]} \Big ) b>0.
\end{eqnarray*}
Consequently, we have got the following proposition.
\begin{proposition}\label{prop:M-matrix}
Let $E:\mathbb R^n_{++}\to \mathbb R$ be defined by {\rm(\ref{equiv-a})}.
For any $y>0$, the Jacobian $E'(y)$ is an M-matrix.
Moreover, the equation {\rm (\ref{equiv-a})} has a unique positive solution that is
the unique positive solution to the M-Teq {\rm (\ref{eqn:M-teq})}.
\end{proposition}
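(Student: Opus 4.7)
The plan is to prove the two assertions separately, leveraging the identities that have already been recorded in the discussion immediately preceding the statement.

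For the M-matrix claim, I would first verify that $E'(y)$ is a Z-matrix. Expanding the displayed formula
\[
E'(y) = \mathrm{diag}(y^{[-1]})\bigl[f'(y) - \mathrm{diag}(f(y))\mathrm{diag}(y^{[-1]})\bigr],
\]
the subtracted term is diagonal, so the off-diagonal entries of $E'(y)$ coincide (up to multiplication by the positive diagonal matrix $\mathrm{diag}(y^{[-1]})$) with those of $f'(y) = \mathcal{A}(y^{[1/(m-1)]})^{m-2}\,\mathrm{diag}(y^{[1/(m-1)-1]})$. Since $\mathcal{A}=s\mathcal{I}-\mathcal{B}$ with $\mathcal{B}\ge 0$, every entry $a_{i i_2\ldots i_m}$ with $(i_2,\ldots,i_m)\ne(i,\ldots,i)$ is nonpositive, so for $i\ne j$ and $y>0$ the entry $(\mathcal{A}(y^{[1/(m-1)]})^{m-2})_{ij}$ is a sum of nonpositive terms. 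Multiplication by $\mathrm{diag}(y^{[1/(m-1)-1]})$ and then by $\mathrm{diag}(y^{[-1]})$ preserves sign, so $E'(y)$ is indeed a Z-matrix.

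Having established the Z-matrix property, I would combine it with the identity
\[
E'(y)y = \mathrm{diag}(y^{[-1]})\,b > 0,
\]
which the text has already derived. Since $y\in\mathbb{R}^n_{++}$ and $E'(y)y\in\mathbb{R}^n_{++}$, the equivalence (i)$\Leftrightarrow$(ii) in Theorem~\ref{th:1} shows that $E'(y)$ is a nonsingular M-matrix, which in particular gives the desired M-matrix conclusion.

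For the second assertion I would exploit that the map $x\mapsto y=x^{[m-1]}$ is a bijection from $\mathbb{R}^n_{++}$ to itself with inverse $y\mapsto y^{[1/(m-1)]}$. Because $y^{[-1]}>0$ for $y>0$, we have $E(y)=0$ if and only if $f(y)=0$, which in turn holds if and only if $F(y^{[1/(m-1)]})=0$. Thus positive solutions of~\eqref{equiv-a} correspond one-to-one with positive solutions of the original M-Teq~\eqref{eqn:M-teq}. Theorem~\ref{th:1}(i), applied under the standing assumption that $\mathcal{A}$ is a (semi-symmetric) strong M-tensor and $b\in\mathbb{R}^n_{++}$, guarantees that the latter has a unique positive solution, and this uniqueness transfers through the bijection to~\eqref{equiv-a}, completing the proof.

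The only step requiring any care is the Z-matrix verification, and even that is routine once one observes that the correction $-\mathrm{diag}(f(y))\mathrm{diag}(y^{[-1]})$ is diagonal and therefore invisible off the diagonal; the rest of the proof is essentially bookkeeping around the key identity $E'(y)y=\mathrm{diag}(y^{[-1]})b$ together with the characterization of nonsingular M-matrices in Theorem~\ref{th:1}.
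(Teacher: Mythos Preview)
Your proposal is correct and follows essentially the same approach as the paper: the argument preceding the proposition already records that $E'(y)$ is a Z-matrix and computes $E'(y)y=\mathrm{diag}(y^{[-1]})b>0$, from which the M-matrix property follows via the characterization in Theorem~\ref{th:1}; the bijection $y=x^{[m-1]}$ then transfers the uniqueness of the positive solution from~\eqref{eqn:M-teq} to~\eqref{equiv-a}. Your only addition is spelling out explicitly why the off-diagonal entries of $E'(y)$ are nonpositive, which the paper simply asserts.
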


We are going to develop a  Newton method for solving the nonlinear equation (\ref{equiv-a})
in which the Newton direction $d_k$ is the solution to the system of linear equations
\[
E'(y_k) d+E(y_k)=0,
\]
i.e.,
\[
 \mbox{diag }\Big ( y_k^{[-1]} \Big )\Big [ f'(y_k) - \mbox{diag }\Big (\frac {f(y_k)}{y_k} \Big )\Big ]d+
 \mbox{diag }\Big ( y_k^{[-1]} \Big ) f(y_k)=0,
\]
 or equivalently
\begin{equation}\label{sub:Newton}
\Big [ f'(y_k) - \mbox{diag }\Big (\frac {f(y_k)}{y_k} \Big )\Big ]d+ f(y_k)=0
\end{equation}
Here  $\mbox{diag }\Big (\displaystyle \frac {f(y_k)}{y_k}\Big )$ is a diagonal matrix whose
diagonals  are $\displaystyle \frac {f_i(y_k)}{(y_k)_i}$, $i=1,2,\ldots,n$.
We can regard $d_k$ as an inexact Newton method for solving the equation $f(y)=0$ because the
Newton equation (\ref{sub:Newton})
can be written as
\[
f'(y_k)d_k+f(y_k)=r_k,\quad r_k=\mbox{diag }\Big (\frac {f(y_k)}{y_k} \Big )d_k=O(\|f(y_k)\|\,\|d_k\|),
\]
if $y_k>0$ is bounded away from zero.

Let $ y_k(\alpha)=y_k+\alpha d_k$. Then $y_k(\alpha)$ satisfies
\[
\Big [ f'(y_k) - \mbox{diag }\Big (\frac {f(y_k)}{y_k} \Big )\Big ] y_k(\alpha)  =
\Big [ f'(y_k) - \mbox{diag }\Big (\frac {f(y_k)}{y_k} \Big )\Big ]y_k -\alpha f(y_k)=b-\alpha f(y_k).
\]
Since the Jacobian
\[
E'(y_k)= \mbox{diag }\Big ( y_k^{[-1]} \Big )\Big [ f'(y_k) - \mbox{diag }\Big (\frac {f(y_k)}{y_k} \Big )\Big ]
\]
is an M-matrix and $y_k>0$, it is clear that
the matrix
\[
f'(y_k) - \mbox{diag }\Big (\frac {f(y_k)}{y_k} \Big )
\]
is an M-matrix too. Therefore, the inequality $y_k(\alpha)>0$ will be guaranteed if
\begin{equation}\label{step-1}
b-\alpha f(y_k)>0.
\end{equation}
Let
\begin{equation}\label{alpha-max}
\bar \alpha_k ^{\max}= \min\Big \{\frac {b_i}{f_i(y_k)}:\; f_i(y_k)>0\Big \}.
\end{equation}
It is clear that
\[
y_k+\alpha d_k>0,\quad\forall \alpha \in (0,\bar\alpha_k^{\max}).
\]

The iterative process of the Newton method is stated as follows.

\begin{algorithm}\label{algo:Newton} ({\bf Newton's Method})
\begin{itemize}
\item [] {\bf Initial.} Given constant $\sigma, \rho\in (0,1)$ and $\epsilon > 0$. Select an  initial point $x_0>0$.
such that $y_0=x_0^{[m-1]}$ satisfies $f(y_0)<b$. Let $k=0$.
\item [] {\bf Step 1.} Stop if $\|E(y_k)\|<\epsilon$.
\item [] {\bf Step 2.} Solve the system of linear equations (\ref{sub:Newton})
to get $d_k$.
\item [] {\bf Step 3.}
For given constant $\sigma \in (0,1)$, let $\alpha _k=\max \{\rho^i:\; i=0,1,\ldots\}$ such that
$y_k+\alpha_kd_k>0$ and that the inequality
\begin{equation}\label{search:descent}
\|E(y_k+\alpha _kd_k)\|^2 \le (1-2 \sigma \alpha _k)\|E(y_k)\|^2,\quad\sigma\in (0,1).
\end{equation}
 is satisfied.
\item [] {\bf Step 3.} Let $y_{k+1}=y_k+\alpha _kd_k$. Go to Step 1.
\end{itemize}
\end{algorithm}

\begin{remark}\label{remark-1}
It is easy to see that the inequality (\ref{step-1})  is guaranteed if $f(y_k)<b$.
So, at the beginning, we select $y_0>0$ satisfying
$f(y_0)<b$ and at each iteration, we let $y_{k+1}=y_k+\alpha_kd_k$
such that $f(y_{k+1})<b$. In this way, the inequalities
$f(y_k)<b$ for all $k$.
\end{remark}

\begin{lemma}\label{lm:positive}
Let  $\{y_k\}$ be generated by Algorithm {\rm\ref{algo:Newton}}.
Then there is a  positive constant $c$ such that
 \begin{equation}\label{bound-y}
 y_k  \ge c{\bf e},\quad\forall k\ge 0,
 \end{equation}
 where ${\bf e}=(1,1,\ldots, 1)^T$.
\end{lemma}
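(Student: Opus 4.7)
The plan is to combine the monotone decrease of $\|E(y_k)\|$ enforced by the line search (\ref{search:descent}) with the M-tensor splitting ${\cal A} = s{\cal I} - {\cal B}$ (where ${\cal B} \geq 0$ and $s > \rho({\cal B})$). The key observation is that the line search gives a uniform upper bound on the scaled residual $y_k^{[-1]} \circ f(y_k) = E(y_k)$, and the M-tensor decomposition converts this into a uniform lower bound on $y_k$ via the positivity of $b$.

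First I would note that (\ref{search:descent}) together with $\sigma, \alpha_k > 0$ yields $\|E(y_{k+1})\| \leq \|E(y_k)\|$, so by induction $\|E(y_k)\| \leq \|E(y_0)\| =: M_0$ for every $k$. Since $(E(y_k))_i = f_i(y_k)/(y_k)_i$ and $|(E(y_k))_i| \leq \|E(y_k)\| \leq M_0$, we obtain in particular the lower bound $f_i(y_k) \geq -M_0 (y_k)_i$ for each index $i$ and each iteration $k$ (note that $y_k > 0$ is maintained inductively by Step 3 of Algorithm \ref{algo:Newton} starting from $y_0 > 0$, so $(y_k)_i$ in the denominator is legitimate).

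Next, writing $x_k = y_k^{[1/(m-1)]} > 0$ and using ${\cal A}x_k^{m-1} = s\, x_k^{[m-1]} - {\cal B}x_k^{m-1} = s\, y_k - {\cal B}x_k^{m-1}$, I would substitute into the bound from the previous paragraph to get
\[
-M_0(y_k)_i \;\le\; f_i(y_k) \;=\; s(y_k)_i - ({\cal B}x_k^{m-1})_i - b_i,
\]
which rearranges to
\[
(s + M_0)(y_k)_i \;\geq\; b_i + ({\cal B}x_k^{m-1})_i \;\geq\; b_i,
\]
where the last inequality uses $({\cal B}x_k^{m-1})_i \geq 0$ (a consequence of ${\cal B} \geq 0$ and $x_k > 0$). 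Taking $c := \min_{1 \leq i \leq n} b_i/(s + M_0)$, which is strictly positive since $b \in \mathbb{R}^n_{++}$, then delivers $(y_k)_i \geq c$ for all $i$ and $k$, i.e., $y_k \geq c{\bf e}$.

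I do not expect any serious obstacle in executing this plan: the line search supplies the uniform control on the scaled residual for free, and the M-tensor structure immediately converts that control into the desired lower bound via the positivity of $b$. The only subtle point to keep track of is the inductive maintenance of $y_k > 0$, which Step 3 of the algorithm enforces by construction through the restriction $y_k + \alpha_k d_k > 0$.
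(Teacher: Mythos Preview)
Your proof is correct and follows essentially the same approach as the paper: both use the line-search monotonicity $\|E(y_k)\| \le \|E(y_0)\|$ together with the M-tensor splitting ${\cal A} = s{\cal I} - {\cal B}$ and the strict positivity of $b$ to bound $(y_k)_i$ away from zero. The only stylistic difference is that the paper argues by contradiction (if $(y_k)_i \to 0$ along a subsequence then $|E_i(y_k)| \ge b_i/(y_k)_i - s \to \infty$), whereas you rearrange the same inequality directly to obtain the explicit constant $c = \min_i b_i/(s + M_0)$.
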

\begin{proof}
It is clear that the sequence of the function evaluations $\{\|E(y_k)\|\}$ is decreasing and
hence bounded by some constant $\overline M>0$, i.e.,
\[
\| E(y_k)\|\le \overline M.
\]
Since $\cal A$ is an M-tensor, there is a constant $s>0$ and a nonnegative tensor ${\cal B}\ge 0$ such that
${\cal A}=s {\cal I}-{\cal B}$, where $\cal I$ stands for the identity tensor whose
diagonals are all ones and all other elements are zeros.

By the definition of $E(y)$, we have
\[
E(y)=s {\bf e} - \mbox{diag }(y^{-1}){\cal B}\Big ( y^{[\frac {1}{m-1}]} \Big )^{m-1} - b\circ (y^{[-1]}).
\]
Since ${\cal B}\ge 0$, the last inequality implies for any $y>0$ and each $i\in [n]$
\[
|E_i(y)| \ge  \frac {b_i}{y_i}+  y_i^{-1}\Big ( {\cal B}\Big ( y^{[\frac {1}{m-1}]} \Big )^{m-1}\Big )_i-s
\ge   \frac {b_i}{y_i}-s .
\]
Suppose there is an index $i$ and an infinite set $K$ such that
$\lim_{k\in K,\,k\to\infty}(y_k)_i=0$. We have
\[
\overline M \ge \lim_{k\in K,\,k\to\infty} | E_i(y_k) |\ge  \lim_{k\in K,\,k\to\infty}\frac {b_i}{(y_k)_i}-s
=+\infty,
\]
which yields a contradiction. The contradiction shows that the  inequality in (\ref{lm:positive})
is satisfied with some positive
constant $c$.
\end{proof}

The following theorem establishes the global convergence of the proposed method.
\begin{theorem}\label{th:conv-Newton}
Suppose that the sequence $\{y_k\}$ generated by Algorithm {\rm\ref{algo:Newton}} is bounded.
Then $\{y_k\}$ converges to the unique positive solution to the M-Teq {\rm(\ref{eqn:M-teq})}.
\end{theorem}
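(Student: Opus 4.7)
My plan is to treat Algorithm \ref{algo:Newton} as a damped (inexact) Newton scheme applied to the merit function $\phi(y)=\tfrac12\|E(y)\|^2$ and to combine the monotonicity produced by the line search \eqref{search:descent} with the lower bound on $y_k$ from Lemma \ref{lm:positive}. First I would record the two structural facts that make the iteration go through: by Proposition \ref{prop:M-matrix} the Jacobian $E'(y_k)$ is a nonsingular M-matrix at every positive iterate, so the Newton equation \eqref{sub:Newton} uniquely determines $d_k$, and a direct computation gives
\[
\nabla\phi(y_k)^\top d_k=E(y_k)^\top E'(y_k)d_k=-\|E(y_k)\|^2.
\]
Hence $d_k$ is a descent direction for $\phi$, and a standard Taylor argument shows that for any sufficiently small $\alpha>0$ both $y_k+\alpha d_k>0$ and \eqref{search:descent} hold, so the backtracking in Step~3 terminates in finitely many trials with some $\alpha_k>0$ and the algorithm is well defined.

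Next I would exploit the structural bounds. Since $\{y_k\}$ is bounded by assumption and bounded below by $c\mathbf{e}$ by Lemma \ref{lm:positive}, the map $y\mapsto E'(y)^{-1}$ is continuous on the compact set containing the iterates; hence $\{d_k\}=\{-E'(y_k)^{-1}E(y_k)\}$ is bounded, say by $M$, and there exists a uniform $\bar\alpha>0$ such that $y_k+\alpha d_k>0$ for every $\alpha\in[0,\bar\alpha]$ and every $k$. Therefore the only constraint on $\alpha_k$ that can become binding is the Armijo-type inequality \eqref{search:descent}. The line search implies $\|E(y_{k+1})\|^2\le(1-2\sigma\alpha_k)\|E(y_k)\|^2$, so $\{\|E(y_k)\|^2\}$ is monotone decreasing and convergent, and summing yields
\[
\sum_{k=0}^{\infty}\alpha_k\|E(y_k)\|^2<\infty.
\]

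The main obstacle is to upgrade this summability to $\|E(y_k)\|\to 0$; the danger is that $\alpha_k\to 0$ while $\|E(y_k)\|$ stays bounded away from zero. I would rule this out by contradiction: if $\|E(y_k)\|\ge\varepsilon>0$ along a subsequence, then necessarily $\alpha_k\to 0$ along that subsequence, so for all large $k$ the trial step $\alpha_k/\rho$ violates \eqref{search:descent}, i.e.\
\[
\|E(y_k+(\alpha_k/\rho)d_k)\|^2>\bigl(1-2\sigma(\alpha_k/\rho)\bigr)\|E(y_k)\|^2.
\]
Expanding the left-hand side by Taylor's theorem, using $\nabla\phi(y_k)^\top d_k=-\|E(y_k)\|^2$, the uniform bound $\|d_k\|\le M$, and the continuity of $E''$ on the compact strip $\{c\mathbf{e}\le y\le R\mathbf{e}\}$, the inequality reduces to $2(1-\sigma)\|E(y_k)\|^2\le O(\alpha_k/\rho)$, which forces $\|E(y_k)\|\to 0$—a contradiction. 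Therefore $\|E(y_k)\|\to 0$.

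Finally, every accumulation point $\bar y$ of the bounded sequence $\{y_k\}$ satisfies $\bar y\ge c\mathbf{e}>0$ and $E(\bar y)=0$. By Proposition \ref{prop:M-matrix}, equation \eqref{equiv-a} has a unique positive solution, so the accumulation point is unique, and the bounded sequence $\{y_k\}$ consequently converges to it. Setting $\bar x=\bar y^{[1/(m-1)]}$ then gives the unique positive solution of the M-Teq \eqref{eqn:M-teq}, completing the argument.
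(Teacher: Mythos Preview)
Your proposal is correct and follows essentially the same route as the paper: both combine Lemma~\ref{lm:positive} with the boundedness assumption to trap the iterates in a compact subset of $\mathbb{R}^n_{++}$, argue that the positivity constraint $y_k+\alpha d_k>0$ cannot be the binding one for small $\alpha$, and then run the standard ``failed Armijo step'' contradiction (you via a Taylor expansion, the paper by dividing the violated inequality by $\alpha_k/\rho$ and passing to the limit along a convergent subsequence) to force $\|E(y_k)\|\to 0$, after which uniqueness of the positive solution gives convergence of the whole sequence. The only cosmetic differences are that the paper bounds the positivity threshold through the explicit formula \eqref{alpha-max} for $\bar\alpha_k^{\max}$ rather than through $\|d_k\|$, and it organizes the endgame as a dichotomy on $\tilde\alpha=\lim\alpha_k$ rather than through the summability $\sum_k\alpha_k\|E(y_k)\|^2<\infty$; neither changes the substance of the argument.
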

\begin{proof}
We first show that the maximum step length $\bar{\alpha}_k^{\max}$ satisfying  (\ref{alpha-max}) can be bounded away from zero.
That is, there is a constant $\bar\alpha$ such that
\begin{equation}\label{bound:alpha}
\bar{\alpha}_k^{\max}\ge \bar\alpha,\quad\forall k\ge 0.
\end{equation}
Indeed, it follows from the last lemma that
\[
\overline M \ge |E_i(y_k)| = \frac {|f_i(y_k)|}{(y_k)_i}.
\]
Since $\{y_k\}$ is bounded, the last inequality implies that for each $i$,
$\{|f_i(y_k)|\}$ is bounded too. By the definition of $\bar\alpha_k^{\max}$, it is bounded away from some
constant $\bar \alpha$. Consequently,
the inequality (\ref{bound:alpha}) is satisfied for all $k\ge 0$.

Next, we show that there is an accumulation point $\bar y$ of $\{y_k\}$ that is a
positive solution to (\ref{eqn:M-teq}).

Suppose $\{y_k\}_K\to\bar y$. By Lemma \ref{lm:positive}, it is clear that $\bar y>0$. Consequently,
$E{\color{red}'}(\bar y)$ is an M-matrix. Moreover,
\[
\lim_{k\in K,\,k\to\infty }d_k=-E'(\bar y)^{-1}E(\bar y)\stackrel\triangle {=}\bar d.
\]
 Without loss of generality, we let
$\lim_{k\in K,\,k\to\infty }\alpha_k=\tilde \alpha$.

If $\tilde\alpha >0$, then the inequality (\ref{search:descent}) shows that
$\{\|E(y_{k+1})\|\}_k\to 0$.

If $\tilde \alpha=0$, then when $k\in K$ is sufficiently large,
the inequality (\ref{search:descent}) is not satisfied with $\alpha_k'=\rho ^{-1}\alpha_k$, i.e.,
\[
\|E(y_k+\alpha _k'd_k)\|^2 - \|E(y_k)\|^2> -2 \sigma \alpha _k'\|E(y_k)\|^2,\quad\sigma\in (0,1).
\]
Dividing both sizes of the inequality by $\alpha_k'$ and then taking limits as $k\to\infty$
with $k\in K$, we get
\[
-\|E(\bar y)\|^2=E(\bar y)^TE'(\bar{y})\bar d\ge -2\sigma\|E(\bar{y})\|^2,
\]
 which implies $E(\bar y)=0$.

Since $\{\|E(y_k)\|\}$ converges, it follows from Lemma \ref{lm:positive} that
every accumulation point of $\{y_k\}$ is a positive solution to (\ref{eqn:M-teq}).
However, the positive solution of (\ref{eqn:M-teq}) is unique.
Consequently, the whole sequence $\{y_k\}$ converges to the unique positive  solution to (\ref{eqn:M-teq}).
\end{proof}

By a standard argument, it is not difficult to show that the convergence rate of $\{y_k\}$ is quadratic.
\begin{theorem}\label{th:conv-Newton2}
Let the conditions in Theorem {\rm\ref{th:conv-Newton}} hold.
Then the convergence rate of $\{y_k\}$ is quadratic.
\end{theorem}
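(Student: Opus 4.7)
The plan is to follow the standard two-stage argument for globalized Newton methods: first show that the line search accepts the unit step $\alpha_k = 1$ for all sufficiently large $k$, and then invoke the classical quadratic convergence estimate for the pure Newton iteration applied to $E(y)=0$.

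For the first stage, let $\bar{y}>0$ denote the limit point from Theorem \ref{th:conv-Newton}. By Proposition \ref{prop:M-matrix} the Jacobian $E'(\bar y)$ is a nonsingular M-matrix, hence invertible, and by continuity $E'(y_k)^{-1}$ is bounded on a neighborhood of $\bar y$. Consequently $d_k = -E'(y_k)^{-1}E(y_k) \to 0$, so $y_k + d_k > 0$ for all large $k$. Using a second-order Taylor expansion of $E$ around $y_k$ together with $E(y_k)+E'(y_k)d_k=0$, I would bound
\[
\|E(y_k+d_k)\| \le \tfrac{1}{2} L \|d_k\|^2 \le C \|E(y_k)\|^2,
\]
where $L$ is a local Lipschitz constant of $E'$. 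For any fixed $\sigma\in(0,1)$, this implies $\|E(y_k+d_k)\|^2 \le (1-2\sigma)\|E(y_k)\|^2$ whenever $\|E(y_k)\|$ is small enough, so the Armijo test (\ref{search:descent}) holds at $\alpha_k = 1$. Therefore, for all $k$ sufficiently large, $y_{k+1} = y_k + d_k$.

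For the second stage, once the unit step is always accepted, the standard identity
\[
y_{k+1}-\bar y = E'(y_k)^{-1}\bigl[E'(y_k)(y_k-\bar y) - (E(y_k)-E(\bar y))\bigr]
\]
together with Lipschitz continuity of $E'$ and the uniform bound on $\|E'(y_k)^{-1}\|$ yields
\[
\|y_{k+1}-\bar y\| \le \tfrac{1}{2}L \|E'(y_k)^{-1}\| \,\|y_k-\bar y\|^2 = O\bigl(\|y_k-\bar y\|^2\bigr),
\]
which is precisely quadratic convergence.

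The main obstacle is justifying the smoothness assumptions on $E$ near $\bar y$: the map $f(y)=\mathcal{A}(y^{[1/(m-1)]})^{m-1}-b$ involves the fractional power $y^{[1/(m-1)]}$ and $E(y)=y^{[-1]}\circ f(y)$ also involves $y^{[-1]}$, neither of which is smooth at the boundary $y=0$. However, Lemma \ref{lm:positive} guarantees $y_k \ge c\mathbf e > 0$ uniformly, and $\bar y > 0$, so the entire tail of the sequence lies in a compact subset of $\mathbb{R}_{++}^n$ on which $E$ is real-analytic and $E'$ is Lipschitz with a uniform constant. This local Lipschitz property of $E'$ is all that is needed to close both stages of the argument.
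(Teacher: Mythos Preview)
Your proposal is correct and is exactly the ``standard argument'' that the paper invokes without writing out; the paper provides no proof of Theorem~\ref{th:conv-Newton2} beyond the single sentence ``By a standard argument, it is not difficult to show that the convergence rate of $\{y_k\}$ is quadratic.'' One small correction: your claim that the unit step is accepted ``for any fixed $\sigma\in(0,1)$'' actually needs $\sigma<\tfrac12$, since at $\alpha_k=1$ the Armijo condition~(\ref{search:descent}) reads $\|E(y_k+d_k)\|^2\le(1-2\sigma)\|E(y_k)\|^2$ and the right-hand side is nonpositive for $\sigma\ge\tfrac12$; Algorithm~\ref{algo:Newton} in the paper also states $\sigma\in(0,1)$, so this restriction is implicit there as well, but your write-up should say it.
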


\section{An Extension}
\label{sec:4}
In this section, we extend the Newton method proposed in the last section to the
M-Teq (\ref{eqn:M-teq}) with $b\in \mathbb R^n_+$.
In the case $b$ has zero elements, the M-Teq may have multiple nonnegative or positive
solutions. Our purpose is to find one nonnegative or positive solution of the equation.

We see from the definition of $E(y)$ that the function
$E(y)$ and its Jacobian are not well defined at a point with zero elements.
Therefore, the Newton method proposed in the last section
can not be applied to find a nonnegative solution  with zero elements.
Fortunately, from Corollary \ref{cor-1}, we can  get a nonnegative solution
of (\ref{eqn:M-teq}) by finding a positive solution to a lower dimensional M-Teq.

Without loss of generality, we make the following assumption.

\begin{assumption}\label{ass:B}
Suppose that tensor $\cal A$ is a semi-symmetric and strong M-tensor, and $b\in \mathbb R^n_+$.
Moreover, every nonnegative solution of
the M-Teq {\rm(\ref{eqn:M-teq})} is positive.
\end{assumption}

Similar to the Newton method by He, Ling, Qi and Zhou \cite{He-Ling-Qi-Zhou-18},
we propose another Newton method, which we call a regularized Newton method, such that the method is still
globally and quadratically convergent without assuming the boundedness of the generated sequence
of iterates.

It is easy to see that the M-Teq (\ref{eqn:M-teq}) is equivalent to the following nonlinear equation
\begin{equation}\label{M-teq:equiv}
E(t,y)\stackrel\triangle {=} \left (\begin{array}{c}
t \\y^{[-1]} \circ f(y) +ty
\end{array}\right )=\left (\begin{array}{c}
t \\ \overline E(t,y)
\end{array}\right )
=0,
\end{equation}
where
\[
\overline E(t,y)= E(y) +ty=y^{[-1]} \circ f(y) +ty.
\]
The Jacobian of $E(t,y)$ is
\[
E'(t,y)= \left (\begin{array}{cc}
1 & 0  \\  y &  \overline E'_y(t,y)
\end{array}\right ),
\]
where
\[
\overline E'_y(t,y) = E'(y) +tI
\]
satisfying
\[
\overline{E}'_y(t,y)y= E'(y)y+ty=y^{[-1]}\circ b+ty>0,\quad \forall y\in \mathbb R^n_{++},\;\forall t>0.
\]
Since $\overline E'_y(t,y)$ is a Z-matrix, the last inequality shows that it is a nonsingular M-matrix.
As a result, for any $t>0$ and any $y\in \mathbb R^n_{++}$, the Jacobian
$E'(t,y)$ is nonsingular.

Now, we propose a Newton method for solving the equivalent nonlinear equation (\ref{M-teq:equiv})
to the M-Teq (\ref{eqn:M-teq}). The idea is similar to the Newton method by He, Ling, Qi and
Zhou \cite{He-Ling-Qi-Zhou-18}. Details are given below.

Given constant $\gamma\in (0,1)$. Denote
\[
\theta (t,y)=\frac 12 \|E(t,y)\|^2,\quad \beta (t,y)=\gamma \min\{1,\,\|E(t_k,y_k)\|^2\}.
\]

The subproblem of the method is the following system of linear equations:
\begin{equation}\label {sub:extension}
E'(t_k,y_k)d_k+E(t_k,y_k)=\beta(t_k,y_k){\bf e}_1,
\end{equation}
where ${\bf e}_1=(1,0,\ldots, 0)^T\in \mathbb R^{n+1}$. Let $d_k=(d^t_k, d_k^y)$.

Suppose $t_k\le \bar t$ with $\bar t$ satisfying $\bar t\gamma<1$. Then the Newton direction $d_k$ satisfies
\begin{eqnarray}\label{dir:descent-e}
\nabla \theta (t_k,y_k)^Td_k &=& E(t_k,y_k)^TE'(t_k,y_k)d_k=-\|E(t_k,y_k)\|^2+ \beta(t_k,y_k)\nonumber\\
                             &\le& -(1 -\gamma\bar t)\|E(t_k,y_k)\|^2.
\end{eqnarray}
As a result, for given constant $\sigma\in (0,1)$, the following inequality
\begin{equation}\label{search:extension}
\theta (t_k+\alpha_kd^t_k, y_k+\alpha_kd^y_k) \le [1- 2\sigma (1 -\gamma\bar t)\alpha_k]\theta (t_k,y_k)
\end{equation}
is satisfied for all $\alpha_k>0$ sufficiently small.

The steps of the method are stated as follows.

\begin{algorithm}

\label{algo:extension}{\bf{Regularized Newton Method}}
\begin{itemize}
\item [] {\bf Initial.} Given constants $\gamma,\sigma, \rho\in (0,1)$, $\epsilon>0$ and $\bar t>0$
such that $\bar t\gamma<1$. Given
 initial point  $x_0>0$ and $t_0=\bar t$. Let $y_0=x_0^{[m-1]}$ and  $k=0$.
\item [] {\bf Step 1.} Stop if $\|E(t_k,y_k)\|\le \epsilon$.
\item [] {\bf Step 2.} Solve the system of linear equations (\ref{sub:extension})
to get $d_k$.
\item [] {\bf Step 3.} Find $\alpha_k=\max\{\rho ^i:\, i=0,1,\ldots \}$
such that $y_k+\rho ^id^y_k>0$ and that
(\ref{search:extension}) is satisfied with $\alpha_k=\rho ^i$.
\item [] {\bf Step 4.} Let $y_{k+1}=y_k+\alpha_kd_k^y$ and $t_{k+1}=t_k+\alpha_kd^t_k$.
\item [] {\bf Step 5.} Let $k:=k+1$. Go to Step 1.
\end{itemize}
\end{algorithm}

Following a similar argument as the proof of Lemma 3.2 of \cite{He-Ling-Qi-Zhou-18},
it is not difficult to get the following proposition. It particularly
shows that the above algorithm is well-defined.
\begin{proposition}\label{prop:well}
Suppose that $\cal A$ is a strong M-tensor and $b\in \mathbb R^n_+$. Then the sequence of
iterates $\{(t_k,y_k)\}$ generated by
Algorithm {\rm \ref{algo:extension}} satisfies
\[
0< t_{k+1}\le t_k\le\bar t
\]
and
\[
t_k> \bar t\beta (t_k,y_k).
\]
In addition, the sequence of function evaluations $\{\theta (t_k,y_k)\}$ is decreasing.
\end{proposition}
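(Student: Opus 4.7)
The plan is to prove the three assertions together by induction on $k$, exploiting the block-triangular structure of the augmented Newton system (\ref{sub:extension}). Since the top row of $E'(t_k,y_k)$ is $(1,0,\ldots,0)$ and the first entry of $E(t_k,y_k)$ equals $t_k$, the first scalar equation of (\ref{sub:extension}) reads $d^t_k + t_k = \beta(t_k,y_k)$, so that $d^t_k = \beta_k - t_k$ and
\[
t_{k+1} \;=\; t_k + \alpha_k d^t_k \;=\; (1-\alpha_k)\,t_k + \alpha_k\,\beta_k.
\]
Because the backtracking rule forces $\alpha_k = \rho^{i_k}\in(0,1]$, the new iterate $t_{k+1}$ is a convex combination of $t_k$ and $\beta_k\ge 0$; this identity will be the engine of the whole argument.

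Next, the monotonicity $\theta(t_{k+1},y_{k+1})\le\theta(t_k,y_k)$ is immediate from the line-search acceptance inequality (\ref{search:extension}): with $\sigma\in(0,1)$, $\alpha_k>0$ and $1-\gamma\bar t>0$, the contraction factor $1-2\sigma(1-\gamma\bar t)\alpha_k$ lies in $(0,1)$. This also gives $\|E(t_{k+1},y_{k+1})\|\le \|E(t_k,y_k)\|$, and hence $\beta_{k+1}\le\beta_k$ by the monotonicity of $\min\{1,\cdot\}$. This monotone decrease of $\beta$ is what will allow the strict bound $t_k>\bar t\beta_k$ to be carried forward.

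For the remaining two assertions I would run a simultaneous induction on $k$. The base case is trivial: $t_0=\bar t>0$, and $\bar t\beta_0=\bar t\gamma\min\{1,\|E_0\|^2\}\le \bar t\gamma<\bar t=t_0$ since $\gamma<1$. For the inductive step, assume $0<t_k\le\bar t$ and $t_k>\bar t\beta_k$; then the hypothesis combined with $\bar t\gamma<1$ yields $\beta_k<t_k$, so the convex-combination formula gives $0<t_{k+1}\le t_k\le\bar t$, with strict positivity preserved because $\alpha_k\le 1$ and $t_k>0$. Finally, the bound $t_{k+1}>\bar t\beta_{k+1}$ is obtained by plugging $\beta_{k+1}\le\beta_k$ into the identity $t_{k+1}=(1-\alpha_k)t_k+\alpha_k\beta_k$ and performing an algebraic rearrangement that uses $\bar t\gamma<1$ to guarantee strictly positive slack.

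The step I expect to be the main obstacle is the inductive propagation of the strict inequality $t_k>\bar t\beta(t_k,y_k)$, since both sides move simultaneously under one update: $t_k$ drifts toward $\beta_k$ through the convex-combination formula while $\beta_k$ itself shrinks because $\|E\|$ does. The leverage for closing this estimate comes from pairing the line-search–forced monotonicity $\beta_{k+1}\le\beta_k$ with the structural slack built in by the requirement $\bar t\gamma<1$, exactly as in the proof of Lemma 3.2 of \cite{He-Ling-Qi-Zhou-18}; once that step is carried out, the well-definedness of Algorithm \ref{algo:extension} follows as a side benefit.
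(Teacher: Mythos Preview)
Your overall strategy --- extracting $d_k^t=\beta_k-t_k$ from the first row of (\ref{sub:extension}), writing $t_{k+1}=(1-\alpha_k)t_k+\alpha_k\beta_k$, and running an induction alongside the line-search monotonicity of $\theta$ --- is exactly what the paper intends (it simply defers to Lemma~3.2 of \cite{He-Ling-Qi-Zhou-18}).

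There is, however, a real gap in your inductive step. You claim that the hypothesis $t_k>\bar t\beta_k$ combined with $\bar t\gamma<1$ yields $\beta_k<t_k$; this implication is false. Take $\bar t=\tfrac12$, $\gamma=\tfrac{9}{10}$ (so $\bar t\gamma=\tfrac{9}{20}<1$), $t_k=\bar t=\tfrac12$, and suppose $\|E(t_k,y_k)\|\ge 1$, so $\beta_k=\gamma=\tfrac{9}{10}$. Then $\bar t\beta_k=\tfrac{9}{20}<\tfrac12=t_k$, so your hypothesis holds, yet $\beta_k=\tfrac{9}{10}>\tfrac12=t_k$. Worse, with (\ref{sub:extension}) as written one gets $d^t_k=\beta_k-t_k>0$ and hence $t_{k+1}>t_k$, so the very conclusion $t_{k+1}\le t_k$ fails in this scenario. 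The resolution is that the right-hand side of (\ref{sub:extension}) should almost certainly read $\bar t\,\beta(t_k,y_k)\,{\bf e}_1$, which is the standard form in the smoothing-Newton framework of \cite{He-Ling-Qi-Zhou-18}; with that correction $d^t_k=\bar t\beta_k-t_k$, the convex combination runs between $t_k$ and $\bar t\beta_k$, and your inductive hypothesis $t_k>\bar t\beta_k$ then \emph{does} give both $t_{k+1}\le t_k$ and $t_{k+1}\ge\bar t\beta_k\ge\bar t\beta_{k+1}$ directly. You should flag this apparent typo in the paper and redo the induction with the corrected update; as the subproblem is stated, the deduction $\beta_k<t_k$ cannot be made and your argument breaks down at precisely the point you yourself identified as the main obstacle.
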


Since $\cal A$ is an M-tenor, there are a constant $s>0$ and a nonnegative tensor ${\cal B}=(b_{i_1\ldots i_m})$ such that
${\cal A}=s{\cal I}-{\cal B}$, where $\cal I$ is the identity tensor whose diagonal entities are all ones and all
other elements are zeros.
By the definition of $E(y)$, it is easy to get
\[
E(y)=s {\bf e} -y^{[-1]}\circ {\cal B}\Big (y^{[\frac {1}{m-1}]}\Big )^{m-1}-y^{[-1]}\circ b.
\]

\begin{lemma}\label{lm:l-bound}
Suppose that $\cal A$ is a strong M-tensor and $b\in \mathbb R^n_+$. Then
the sequence of iterates $\{y_k\}$ generated by
Algorithm {\rm\ref{algo:extension}} is bounded away from zero. In other words, there is a constant
$\eta>0$ such that
\[
(y_k)_i\ge\eta,\quad\forall k\ge 0,\;\forall i=1,2,\ldots,n.
\]
\end{lemma}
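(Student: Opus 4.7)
The argument follows the template of Lemma \ref{lm:positive}, adapted to the weaker assumption $b \in \mathbb{R}_+^n$. From Proposition \ref{prop:well} the sequence $\theta(t_k,y_k)=\tfrac12\|E(t_k,y_k)\|^2$ is nonincreasing, so both $t_k$ and $\|\overline E(t_k,y_k)\|$ are bounded above by some constant $\overline M$. Argue by contradiction: if the conclusion fails, then after extracting a subsequence there is a nonempty index set $I_0 \subseteq [n]$ with $(y_k)_i \to 0$ for $i \in I_0$ and $(y_k)_j \geq c$ (for some fixed $c > 0$) for all $j \in I_0^c$ and all $k$ sufficiently large.

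Writing $\mathcal{A} = s\mathcal{I} - \mathcal{B}$ with $\mathcal{B} \geq 0$, a direct calculation yields, for each $i \in I_0$,
$$\overline E_i(t_k,y_k) = s - (y_k)_i^{-1} b_i - (y_k)_i^{-1}\bigl[\mathcal{B}(y_k^{[1/(m-1)]})^{m-1}\bigr]_i + t_k(y_k)_i.$$
Each quantity being subtracted is nonnegative and $t_k(y_k)_i \to 0$, so the bound $|\overline E_i| \le \overline M$ forces the entire subtracted expression to stay bounded in $k$; in particular $(y_k)_i^{-1} b_i$ stays bounded, which forces $b_i = 0$ for every $i \in I_0$, i.e., $I_0 \subseteq I^0(b)$. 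By the same token each individual summand
$$b_{ii_2\ldots i_m}\,(y_k)_i^{-1}\,(y_k)_{i_2}^{1/(m-1)}\cdots(y_k)_{i_m}^{1/(m-1)}$$
must remain bounded. For any tuple $(i_2,\ldots,i_m) \in (I_0^c)^{m-1}$, the product of the $m-1$ fractional powers is bounded below by $c$, while $(y_k)_i^{-1} \to \infty$; hence $b_{ii_2\ldots i_m} = 0$ for all such tuples. Since $(i,i_2,\ldots,i_m)$ with $i \in I_0$ and $i_2,\ldots,i_m \notin I_0$ is never a diagonal index, and off-diagonally $a_{ii_2\ldots i_m} = -b_{ii_2\ldots i_m}$, this shows $\mathcal{A}$ is reducible with respect to $I_0 \subseteq I^0(b)$.

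By Theorem \ref{th:block}, the M-Teq then admits a nonnegative solution that vanishes on $I_0$, contradicting Assumption \ref{ass:B}. The main technical obstacle is the second step: isolating the pure-$(I_0^c)^{m-1}$ multiindices as the only ones forced to vanish. This exploits both the nonnegativity of $\mathcal{B}$, which rules out any cancellation among the terms in the expansion of $[\mathcal{B}(y_k^{[1/(m-1)]})^{m-1}]_i$, and the uniform positive lower bound $c$ on $(y_k)_j$ for $j \in I_0^c$ furnished by the subsequence extraction.
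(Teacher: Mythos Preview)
Your proof is correct and follows essentially the same route as the paper: extract a subsequence to identify the index set $I_0$ of coordinates tending to zero, use the decomposition $\mathcal{A}=s\mathcal{I}-\mathcal{B}$ together with nonnegativity of $\mathcal{B}$ to force $b_i=0$ and $a_{ii_2\ldots i_m}=0$ for $i\in I_0$, $i_2,\ldots,i_m\in I_0^c$, conclude reducibility with respect to $I_0\subseteq I^0(b)$, and then invoke Theorem~\ref{th:block} and Assumption~\ref{ass:B} for the contradiction. The only cosmetic difference is that the paper allows $\bar y_j=+\infty$ for $j\in I_0^c$ rather than phrasing it as a uniform lower bound $c>0$, but the effect on the key ratio estimates is identical.
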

\begin{proof}
Suppose that there is an index $i$ and a subsequence $\{y_k\}_K$ such that
$\lim _{k\to\infty,\,k\in K}(y_k)_i=0$. Without loss of generality, we suppose
$\{y_k\}_K\to \bar y$, where some elements of $\bar y$ may be $+\infty$.
Denote $I=\{i: \bar y_i=0\}$ and $I_c=[n]\backslash I_c$. Since Let $\{\theta (t_k,y_k)\}$
is decreasing, it is bounded and so is the sequence $\{\|E(t_k,y_k)\|\}$.
Let $C>0$ be an upper bound of
the sequence $\{\|\overline{E}(t_k,y_k)\|\}$.

For each $i\in I$, it holds that
\begin{eqnarray*}
C &\ge & |\overline{E_i}(t_k,y_k)| = \Big | \frac 1 {(y_k)_i}
    \sum _{i_2,\ldots,i_m}a_{i i_2\ldots i_m} \Big ( (y_k)_{i_2}^{\frac {1}{m-1}}\cdots (y_k)_{i_m}^{\frac {1}{m-1}}\Big )
        -  \frac {b_i} {(y_k)_i} +t_k (y_k)_i\Big |\\
   & =&  \Big | s - \frac 1 {(y_k)_i}
    \sum _{i_2,\ldots,i_m}b_{i i_2\ldots i_m} \Big ( (y_k)_{i_2}^{\frac {1}{m-1}}\cdots (y_k)_{i_m}^{\frac {1}{m-1}}\Big )
        -  \frac {b_i} {(y_k)_i} +t_k (y_k)_i\Big |\\
    &\ge &
    \sum _{i_2,\ldots,i_m}b_{ii_2\ldots i_m} \left ( \frac {(y_k)_{i_2}} {(y_k)_i}
            \cdots \frac {(y_k)_{i_m}} {(y_k)_i}\right )^{\frac {1}{m-1}}
        +  \frac {b_i} {(y_k)_i} -t_k (y_k)_i -s\\
    &\ge & \sum _{i_2,\ldots,i_m \in I_c}b_{ii_2\ldots i_m} \left ( \frac {(y_k)_{i_2}} {(y_k)_i}
            \cdots \frac {(y_k)_{i_m}} {(y_k)_i}\right )^{\frac {1}{m-1}}
        +  \frac {b_i} {(y_k)_i} -t_k (y_k)_i -s .
\end{eqnarray*}
Notice that for any $i\in I_c$, $\bar y_i>0$.
Since $t_k\le \bar t$ and $(y_k)_i\to 0$, as $k\to\infty$ with $k\in K$, the last inequality implies
$b_i=0$ and $a_{ii_2\ldots i_m}=b_{ii_2\ldots i_m}=0$, $\forall i_2,\ldots,i_m \in I_c$.
It means that tensor ${\cal A}$ is reducible with respect to index set $I$.
It then follows from Theorem \ref{th:block} that the M-Teq (\ref{eqn:M-teq}) has a nonnegative solution
that has zero elements. It is a contradiction. The contradiction shows
that $\{y_k\}$ is bounded away from zero.
\end{proof}

\begin{lemma}\label{lm:u-bound}
Suppose that $\cal A$ is a strong M-tensor and $b\in \mathbb R^n_+$.
If there is a $\tilde t>0$ such that $t\ge \tilde t$,
then the sequence of iterates $\{y_k\}$ generated by Algorithm {\rm\ref{algo:extension}} is bounded.
\end{lemma}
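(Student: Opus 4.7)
The plan is to argue by contradiction: suppose $\{y_k\}$ is unbounded and derive a contradiction from the bounded residual information provided by Proposition \ref{prop:well}. Since $\{\theta(t_k,y_k)\}$ is nonincreasing by that proposition, the sequence $\{\|E(t_k,y_k)\|\}$, and in particular $\{\|\overline{E}(t_k,y_k)\|_\infty\}$, is bounded above by some constant $C>0$. If $\{y_k\}$ were unbounded, I would pass to a subsequence along which $\|y_k\|_\infty\to\infty$; since $[n]$ is finite, I can further refine to a subsequence and a fixed index $i\in[n]$ satisfying $(y_k)_i=\|y_k\|_\infty\to+\infty$.

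The main step is to exploit the M-tensor decomposition ${\cal A}=s{\cal I}-{\cal B}$, $\mathcal{B}\ge 0$, to expand the $i$-th component of $\overline{E}$ explicitly as
\[
\overline{E}_i(t_k,y_k)=s+t_k(y_k)_i-\frac{b_i}{(y_k)_i}-\frac{1}{(y_k)_i}\sum_{j_2,\ldots,j_m}b_{ij_2\ldots j_m}\,(y_k)_{j_2}^{1/(m-1)}\cdots(y_k)_{j_m}^{1/(m-1)}.
\]
The key estimate, exactly the point where the choice of $i$ as the argmax of $\|y_k\|_\infty$ pays off, is that $(y_k)_{j_l}\le (y_k)_i$ for every $l$, so
\[
(y_k)_{j_2}^{1/(m-1)}\cdots(y_k)_{j_m}^{1/(m-1)}\le (y_k)_i,
\]
which implies the entire $\mathcal{B}$-term is bounded in absolute value by the constant $M_0:=\sum_{j_2,\ldots,j_m}b_{ij_2\ldots j_m}$, uniformly in $k$. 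The term $b_i/(y_k)_i$ is trivially bounded (indeed, tends to $0$).

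Putting the pieces together and using the standing hypothesis $t_k\ge\tilde t>0$, I obtain
\[
\overline{E}_i(t_k,y_k)\ge s-M_0-\frac{b_i}{(y_k)_i}+\tilde t\,(y_k)_i \;\longrightarrow\; +\infty,
\]
which contradicts the uniform bound $|\overline{E}_i(t_k,y_k)|\le C$. The main technical obstacle I anticipate is nothing deeper than the uniform bound on the $\mathcal{B}$-sum evaluated at the argmax index; everything else is a routine consequence of the monotonicity of $\theta$ established in Proposition \ref{prop:well} and the assumed lower bound on $t_k$.
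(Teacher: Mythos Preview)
Your proposal is correct and follows essentially the same approach as the paper's proof: both use the monotonicity of $\theta$ from Proposition~\ref{prop:well} to bound $\|\overline{E}(t_k,y_k)\|$, evaluate the $i$-th component of $\overline{E}$ at the argmax index of $y_k$, observe that the tensor sum there is uniformly bounded because every ratio $(y_k)_{j_l}/(y_k)_i\le 1$, and then use $t_k\ge\tilde t$ to force $(y_k)_i$ bounded. The only cosmetic differences are that the paper bounds the sum directly via $\sum|a_{i_ki_2\ldots i_m}|$ rather than through the decomposition ${\cal A}=s{\cal I}-{\cal B}$, and it works with a variable index $i_k$ rather than passing to a subsequence with a fixed $i$.
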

\begin{proof}
Denote by $i_k$ the index satisfying $(y_k)_{i_k}=\|y_k\|_{\infty}$. Since
$\{\theta (t_k,y_k)\}$ has an upper bound, so is $\{\|\overline{E}(t_k,y_k)\|\}$.
Let $C$ be an upper bound of $\{\|\overline{E}(t_k,y_k)\|\}$.
It is clear that
\[
\Big |\sum _{i_2,\ldots,i_m}a_{i_ki_2\ldots i_m} \left ( \frac {(y_k)_{i_2}} {(y_k)_{i_k}}
            \cdots \frac {(y_k)_{i_m}} {(y_k)_{i_k}}\right )^{\frac {1}{m-1}}\Big |
            \le \sum _{i_2,\ldots,i_m} |a_{i_ki_2\ldots i_m}|\stackrel\triangle {=} \tilde a_{i_k}
\]
is bounded.
Therefore, we obtain
\begin{eqnarray*}
C &\ge & \|\overline{E}(t_k,y_k)\|\\
    &\ge & \Big | \frac 1 {(y_k)_{i_k}}
    \sum _{i_2,\ldots,i_m}a_{i_ki_2\ldots i_m} \Big ( (y_k)_{i_2}^{\frac {1}{m-1}}\cdots (y_k)_{i_m}^{\frac {1}{m-1}}\Big )
        -  \frac {b_{i_k}} {(y_k)_{i_k}} +t_k (y_k)_{i_k} \Big |\\
    &\ge &  t_k (y_k)_{i_k}- \tilde{a}_{i_k} -  \frac {b_{i_k}} {(y_k)_{i_k}}.
\end{eqnarray*}
The last inequality together with $t_k\ge \tilde t$ implies that $\{\|y_k\|\}$ is bounded.
\end{proof}

The following theorem establishes the global convergence of Algorithm \ref{algo:extension}.
\begin{theorem}\label{th:conv-extension}
Suppose that $\cal A$ is a strong M-tensor and $b\in \mathbb{R}_{+}^n$. Then every accumulation point of
the sequence of iterates $\{(t_k,y_k)\}$ generated by
Algorithm {\rm\ref{algo:extension}} is a positive solution to the M-Teq {\rm(\ref{eqn:M-teq})}.
\end{theorem}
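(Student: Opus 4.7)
The plan is to reduce the claim to showing $t_k\to 0$: from that point, Proposition \ref{prop:well}'s relation $t_k>\bar t\beta(t_k,y_k)$ will force the residual $\|E(t_k,y_k)\|$ to vanish, and every accumulation point $(\bar t,\bar y)$ will then satisfy $E(\bar t,\bar y)=0$ with $\bar y>0$ by Lemma \ref{lm:l-bound}, so that $\bar x=\bar y^{[1/(m-1)]}$ is the desired positive solution.

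First, by Proposition \ref{prop:well}, $\{t_k\}$ is non-increasing and positive, so $t_k\to t_*\ge 0$, and $\{\theta(t_k,y_k)\}$ is non-increasing. I would argue by contradiction that $t_*=0$. Assume $t_*>0$: then Lemma \ref{lm:u-bound} (with $\tilde t=t_*$) bounds $\{y_k\}$ above, while Lemma \ref{lm:l-bound} bounds it below away from zero, so $\{(t_k,y_k)\}$ lies in a compact subset of $[t_*,\bar t]\times\mathbb R^n_{++}$. On this set the Jacobian $E'(t,y)$ is uniformly nonsingular (its lower-right block $\overline E'_y(t,y)=E'(y)+tI$ is a nonsingular M-matrix there), making $\{d_k\}$ bounded. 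Passing to a convergent subsequence $\{(t_k,y_k)\}_K\to(\bar t,\bar y)$ with $\bar t\ge t_*>0$ and $\bar y>0$, I would then run the standard backtracking line-search dichotomy: if $\alpha_k\ge\alpha_*>0$ along $K$, the descent inequality (\ref{search:extension}) forces $\theta(t_k,y_k)\to 0$; if $\alpha_k\to 0$, the failure of (\ref{search:extension}) at step $\alpha_k/\rho$, divided through by $\alpha_k/\rho$ and combined with the descent estimate (\ref{dir:descent-e}), gives in the limit $(1-\sigma)(1-\gamma\bar t)\theta(\bar t,\bar y)\le 0$. Either branch yields $E(\bar t,\bar y)=0$, which is incompatible with $\bar t\ge t_*>0$.

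Having established $t_k\to 0$, I would use $t_k>\bar t\gamma\min\{1,\|E(t_k,y_k)\|^2\}$ from Proposition \ref{prop:well}: for large $k$ one has $t_k<\bar t\gamma$, forcing $\|E(t_k,y_k)\|^2<1$ and hence $\|E(t_k,y_k)\|^2<t_k/(\bar t\gamma)\to 0$. At any accumulation point $(\bar t,\bar y)$ of $\{(t_k,y_k)\}$, continuity then gives $E(\bar t,\bar y)=0$, whence $\bar t=0$ and $\bar y^{[-1]}\circ f(\bar y)=0$. Since Lemma \ref{lm:l-bound} ensures $\bar y>0$, this reduces to $f(\bar y)=0$, so $\bar x=\bar y^{[1/(m-1)]}$ solves the M-Teq (\ref{eqn:M-teq}) in $\mathbb R^n_{++}$.

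The hard part is the case $t_*>0$ in the second paragraph. Ruling it out is where the uniform nonsingularity of $E'(t,y)$ on the compact set of iterates together with the backtracking line-search contradiction must be assembled carefully, and it is the only place where the specific structure of Algorithm \ref{algo:extension}'s search rule is really used; once $t_k\to 0$ is in hand, the rest follows cleanly from Proposition \ref{prop:well} and Lemma \ref{lm:l-bound}.
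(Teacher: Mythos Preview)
Your proposal is correct and follows essentially the same route as the paper: both arguments bound the iterates via Lemmas \ref{lm:l-bound} and \ref{lm:u-bound} and then run the standard backtracking dichotomy on a compact set to reach a contradiction. The only difference is organizational---the paper assumes $\theta(t_k,y_k)\not\to 0$ and infers $t_k\ge\tilde t>0$ from Proposition \ref{prop:well} before invoking the dichotomy, whereas you assume $t_*>0$ directly and recover $\theta\to 0$ afterwards from $t_k\to 0$; these are contrapositive arrangements of the same core argument.
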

\begin{proof}
It suffices to show that the sequence $\{\theta (t_k,y_k)\}$ converges to zero by contradiction.
Suppose on the contrary that there is a constant $\delta>0$ such that
$\theta (t_k,y_k)\ge\delta$, $\forall k\ge 0$. Then
\[
\tilde t \stackrel\triangle {=} \lim_{t\to\infty} t_k\ge
\bar t \lim_{t\to\infty} \beta (t_k,y_k) \ge \bar t \gamma \min \{1,2\delta\}
>0.
\]
By Lemma \ref{lm:u-bound}, $\{y_k\}$ is bounded. Let the subsequence $\{y_k\}_K$ converges to
some point $\bar y$. Lemma \ref{lm:l-bound} ensures $\bar y>0$. It is easy to show that
the Jacobian $E'(\tilde t,\bar y)$ is a nonsingular M-matrix.
Consequently, $\{d_k\}_K$ is bounded. Without loss of generality, we suppose  $\{d_k\}_K$
converges to some $\bar d$. Since $\bar y>0$, there is a constant $\alpha^{\min}>0$
such that $y_k+\alpha_kd_k>0$, $\forall \alpha_k\in (0,\alpha^{\min})$.
Let $\bar\alpha=\mathop {\lim\inf}_{k\to\infty,\,k\in K}\alpha_k$. If $\bar\alpha>0$, the line search
condition (\ref{search:extension}) implies $\theta(\bar y, \tilde t)=0$. If $\bar\alpha =0$,
then when $k$ is sufficiently large, the inequality (\ref{search:extension}) is not satisfied
with $\alpha_k'=\alpha_k\rho^{-1}$, i.e.,
\[
\theta (t_k+\alpha_k'd^t_k, y_k+\alpha_k'd^y_k) - \theta (t_k,y_k) \ge - 2\sigma (1 -\gamma\bar t)\alpha_k'\theta (t_k,y_k).
\]
Dividing both sizes of the last inequality by $\alpha_k'$ and then taking limits
as $k\to\infty$ with $k\in K$, we get
\[
\nabla \theta (\tilde t, \bar y)^T\bar d \ge - 2\sigma (1 -\gamma\bar t)\theta (\tilde t, \bar y).
\]
On the other hand, by taking limits in both sizes of (\ref{dir:descent-e}) as $k\to\infty$ with $k\in K$,
we obtain
\[
\nabla \theta (\tilde t, \bar y)^T\bar d \le - 2(1 -\gamma\bar t)\theta (\tilde t, \bar y).
\]
Since $\sigma\in (0,1)$, the last two inequalities implies $\theta (\bar y,\tilde t)=0$, which yields a
contradiction. As a result, we claim that $\{\theta (t_k,y_k)\}$ converges to zero.
The proof is complete.
\end{proof}

 The last theorem has shown that every accumulation is a positive solution to the M-Teq (\ref{eqn:M-teq}).
However, it does not the existence of the accumulation point.
The following theorem shows that the sequence $\{y_k\}$
is bounded. As a result, it ensure the existence of the accumulation point.
\begin{theorem}\label{th:bound-yk}
Suppose that $\cal A$ is a strong M-tensor and $b\in\mathbb{R}_{+}^n$. Then
the sequence  $\{y_k\}$ generated by
Algorithm {\rm\ref{algo:extension}} is bounded.
\end{theorem}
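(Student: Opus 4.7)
The plan is to argue by contradiction. Suppose $\{y_k\}$ is unbounded; passing to a subsequence (still indexed by $k$) I may assume $M_k := \|y_k\|_\infty \to \infty$, with $i_k$ the index attaining the maximum. I will extract a limiting ``shape vector'' $\bar x \ge 0$, $\bar x \neq 0$, with $\mathcal{A}\bar x^{m-1} \le 0$, and then derive a contradiction from Theorem \ref{th:sol}(i)--(ii).

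First, I would observe that Proposition \ref{prop:well} gives $\{\theta(t_k,y_k)\}$ decreasing, so $\|\overline E(t_k,y_k)\|$ is bounded by some constant $C$. Writing
\[
\overline E_{i_k}(t_k,y_k) \;=\; \sum_{i_2,\ldots,i_m} a_{i_k i_2\ldots i_m}\Big(\tfrac{(y_k)_{i_2}}{(y_k)_{i_k}}\cdots\tfrac{(y_k)_{i_m}}{(y_k)_{i_k}}\Big)^{1/(m-1)} - \frac{b_{i_k}}{(y_k)_{i_k}} + t_k (y_k)_{i_k},
\]
the first sum is bounded by $\tilde a_{i_k}$ because each ratio lies in $[0,1]$, and the middle term is bounded by Lemma \ref{lm:l-bound}; therefore $\tau_k := t_k M_k$ is bounded. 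Since $M_k\to\infty$ this forces $t_k\to 0$, and then $t_k > \bar t\,\beta(t_k,y_k)$ from Proposition \ref{prop:well} forces $\|E(t_k,y_k)\|\to 0$, so $\overline E(t_k,y_k)\to 0$ componentwise.

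Second, I would normalize and take limits. Set $z_k := y_k/M_k$ and pass to a further subsequence on which $z_k\to \bar z$ with $\|\bar z\|_\infty = 1$, $\tau_k\to \tau\ge 0$, and $i_k\equiv i^*$ is constant. Put $\bar x := \bar z^{[1/(m-1)]}$, so $\bar x\ge 0$, $\bar x\neq 0$; let $J := \{j:\bar x_j>0\}$. For $j\in J$, scaling the identity
\[
\overline E_j(t_k,y_k) \;=\; \frac{1}{(z_k)_j}\sum_{i_2,\ldots,i_m} a_{j i_2\ldots i_m}\big((z_k)_{i_2}\cdots (z_k)_{i_m}\big)^{1/(m-1)} - \frac{b_j}{M_k(z_k)_j} + \tau_k(z_k)_j
\]
and passing to the limit (contributions involving any $i_l\notin J$ vanish because $(z_k)_{i_l}\to 0$) gives, after multiplying by $\bar z_j>0$,
\[
(\mathcal{A}\bar x^{m-1})_j + \tau\,\bar z_j^{\,2} \;=\; 0,
\]
so $(\mathcal{A}\bar x^{m-1})_j\le 0$. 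For $j\notin J$, $\bar x_j = 0$ and only monomials with all $i_2,\ldots,i_m\in J$ survive in $(\mathcal{A}\bar x^{m-1})_j$; since $(i_2,\ldots,i_m)\neq(j,\ldots,j)$ for such terms, each coefficient $a_{j i_2\ldots i_m}\le 0$ and hence $(\mathcal{A}\bar x^{m-1})_j\le 0$ as well.

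Third, with $c := -\mathcal{A}\bar x^{m-1}\ge 0$, Theorem \ref{th:sol}(i) rules out $c=0$ (it would force $\bar x=0$), while Theorem \ref{th:sol}(ii) rules out $c\neq 0$ (it would produce the nonnegative solution $\bar x$ of ${\cal A}x^{m-1}=-c$ with $-(-c)\in\mathbb{R}_+^n\setminus\{0\}$). Either alternative contradicts $\bar x\ge 0$, $\bar x\neq 0$, completing the argument.

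The main obstacle is the bookkeeping in the scaling step: one must simultaneously track the asymptotic rates of $f_j(y_k)/(y_k)_j$ and $t_k(y_k)_j$ for each $j\in J$, verify that terms involving indices outside $J$ drop out in the limit, and confirm that $\tau_k = t_k M_k$ remains bounded so that $\tau$ exists. Once the limiting identity $(\mathcal{A}\bar x^{m-1})_j = -\tau\bar z_j^{\,2}$ is in hand, the reduction to Theorem \ref{th:sol} is immediate.
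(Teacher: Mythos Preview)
Your argument is correct and follows essentially the same route as the paper: bound $t_kM_k$, normalize $y_k$ by its $\ell_\infty$ norm, pass to a limit, and invoke Theorem \ref{th:sol}. The paper, however, splits into two cases according to whether the limit $\tau$ of $t_kM_k$ is zero or positive and then restricts to the principal subtensor $\mathcal A_J$, applying Theorem \ref{th:sol}(i) in the first case and Theorem \ref{th:sol}(ii) in the second; you instead keep $\tau$ in the limiting identity $(\mathcal A\bar x^{m-1})_j=-\tau\bar z_j^{2}$ for $j\in J$, extend the inequality $(\mathcal A\bar x^{m-1})_j\le 0$ to $j\notin J$ via the Z-tensor sign pattern, and then dispatch both alternatives $c=0$ and $c\neq 0$ with a single appeal to Theorem \ref{th:sol}(i)--(ii) for the full tensor. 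Your version is slightly more economical; the paper's case split is more explicit but otherwise equivalent.
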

\begin{proof}
First, similar to the proof of Lemma \ref{lm:u-bound},
it is not difficult to  show that the sequence $\{t_ky_k\}$ is bounded.

Case (i), $\{t_ky_k\}\to 0$. Since $\{\theta (y_k,t_k)\}\to 0$, we immediately have
$\{E(y_k)\}\to 0$. Denote $\mu_k=\|y_k\|_{\infty}$, $\tilde y_k=\mu_k^{-1}y_k$
and $\tilde b_k=\mu_k^{-1}b$.
Clearly, the sequence $\{\tilde y_k\}$ is bounded.
If $\{y_k\}$ is unbounded, then there  is a subsequence $\{\mu_k\}_K\to\infty$,
and hence $\{\tilde b_k\}_K\to 0$.  Without loss of generality, we suppose that
the subsequence
$\{\tilde y_k\}_K$ converges to some $\tilde y\ge 0$.
Denote by $J$ the set of indices $i$ satisfying $\tilde y_i>0$. Obviously, $J\neq\emptyset$.

For some $i\in J$, satisfies $y_i=\|y_k\|_\infty$, we have
\begin{eqnarray*}
|\overline{E}_i(y_k,t_k)| &=& \Big | \frac 1 {(y_k)_i}
    \sum _{i_2,\ldots,i_m}a_{i i_2\ldots i_m} \Big ( (y_k)_{i_2}^{\frac {1}{m-1}}\cdots (y_k)_{i_m}^{\frac {1}{m-1}}\Big )
        -  \frac {b_i} {(y_k)_i} +t_k (y_k)_i\Big |\\
   & =&  \Big |
    \sum _{i_2,\ldots,i_m}a_{i i_2\ldots i_m} \Big ( (\tilde y_k)_{i_2}^{\frac {1}{m-1}}\cdots
    (\tilde y_k)_{i_m}^{\frac {1}{m-1}}\Big )
        -  (\tilde b_k)_i +t_k (y_k)_i\Big | .
\end{eqnarray*}
Taking limits in both sizes of the equality as $k\to\infty$ with $k\in K$ yields
\[
0=\sum _{i_2,\ldots,i_m}a_{i i_2\ldots i_m} \Big (\tilde y_{i_2}^{\frac {1}{m-1}}\cdots
\tilde y_{i_m}^{\frac {1}{m-1}}\Big )
=\sum _{i_2,\ldots,i_m\in J}a_{i i_2\ldots i_m} \Big (\tilde y_{i_2}^{\frac {1}{m-1}}\cdots
\tilde y_{i_m}^{\frac {1}{m-1}}\Big ),
\]
Let ${\cal A}_J$ be the principal subtensor of $\cal A$ with elements
$a_{i_1 i_2\ldots i_m}$, $\forall i_1, i_2,\ldots, i_m\in J$. It is a strong M-tensor but
${\cal A}_J\Big (\tilde y^{[\frac {1}{m-1}]}\Big )_J^{m-1}=0$ with $\tilde y\neq 0$.
It is a contradiction. Consequently, $\{y_k\}$ is bounded.

Case (ii), there are at least one $i$ such that $\mathop{\lim\inf}_{k\to\infty} t_k(y_k)_i>0$.
In other words, there is a subsequence $\{t_ky_k\}_K\to \tilde y\ge 0$ such that
$\tilde y_i>0$ for at least one $i$. Again, denote by $J$ the set of indices for satisfying $\tilde y_i>0$.
Since $\{t_k\}\to 0$, it is easy to see that
\[
\lim_{k\to\infty,\,k\in K} (y_k)_i=+\infty,\quad \forall i\in J.
\]
Denote $\tilde y_k=t_ky_k$. Similar to Case (i), we can get
We derive for any $i\in J$
\begin{eqnarray*}
|\overline{E}_i(y_k,t_k)| &=& \Big | \frac 1 {(y_k)_i}
    \sum _{i_2,\ldots,i_m}a_{i i_2\ldots i_m} \Big ( (y_k)_{i_2}^{\frac {1}{m-1}}\cdots (y_k)_{i_m}^{\frac {1}{m-1}}\Big )
        -  \frac {b_i} {(y_k)_i} +t_k (y_k)_i\Big |\\
   & =&  \Big | \frac 1 {(\tilde y_k)_i}
    \sum _{i_2,\ldots,i_m}a_{i i_2\ldots i_m} \Big ( (\tilde y_k)_{i_2}^{\frac {1}{m-1}}\cdots
    (\tilde y_k)_{i_m}^{\frac {1}{m-1}}\Big )
        -   \frac {b_i} {(y_k)_i} + (\tilde y_k)_i\Big | .
\end{eqnarray*}
Taking limits in both sizes of the equality as $k\to\infty$ with $k\in K$ yields
\begin{small}
\[
0=\sum _{i_2,\ldots,i_m}a_{i i_2\ldots i_m} \Big (\tilde y_{i_2}^{\frac {1}{m-1}}\cdots \tilde y_{i_m}^{\frac {1}{m-1}}\Big )
+\tilde y_i
=\sum _{i_2,\ldots,i_m\in J}a_{i i_2\ldots i_m} \Big (\tilde y_{i_2}^{\frac {1}{m-1}}\cdots \tilde y_{i_m}^{\frac {1}{m-1}}\Big )
+\tilde y_i,
\forall i\in J.
\]
\end{small}
It contradicts Theorem \ref {th:sol} (ii).

The proof is complete.
\end{proof}
Similar to theorem 3.3 of \cite{He-Ling-Qi-Zhou-18}, we have the following theorem.
\begin{theorem}
Let the conditions in Assumption {\rm\ref{ass:B}} hold, then the sequence of iterates $\{t_k,y_k\}$
generated by Algorithm {\rm\ref{algo:extension}} converges to a positive solution of the equation {\rm\ref{M-teq:equiv}}.
 And the convergence rate is quadratic.
\end{theorem}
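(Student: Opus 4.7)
The plan is to combine the two preceding theorems (global convergence in Theorem \ref{th:conv-extension} and boundedness in Theorem \ref{th:bound-yk}) with a local analysis around an accumulation point, following the pattern of Theorem 3.3 in \cite{He-Ling-Qi-Zhou-18}. First I would note that by Theorem \ref{th:bound-yk} the sequence $\{y_k\}$ is bounded, and since $0<t_k\le \bar t$ by Proposition \ref{prop:well}, the combined sequence $\{(t_k,y_k)\}$ has an accumulation point $(\tilde t,\bar y)$. Theorem \ref{th:conv-extension} then forces $\tilde t=0$ and $\bar y>0$, with $E(0,\bar y)=0$, so $\bar y$ (or equivalently $\bar x=\bar y^{[1/(m-1)]}$) is a positive solution of the M-Teq.

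Next I would show local nonsingularity. Because $E'(t,y)$ has the block lower-triangular form
\[
E'(t,y)=\begin{pmatrix} 1 & 0 \\ y & \overline{E}'_y(t,y)\end{pmatrix},
\]
and Proposition \ref{prop:M-matrix} guarantees that $E'(\bar y)=\overline{E}'_y(0,\bar y)$ is a nonsingular M-matrix whenever $\bar y>0$, the Jacobian $E'(0,\bar y)$ is nonsingular. Combined with continuous differentiability of $E$ on $\{t\ge 0\}\times\mathbb R^n_{++}$, this gives the standard ingredients for Newton-type local analysis in a neighborhood $\mathcal N$ of $(0,\bar y)$.

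The crux is then to argue that once $(t_k,y_k)\in\mathcal N$ the full step $\alpha_k=1$ is accepted by the line search (\ref{search:extension}) and that the iteration reduces to a standard Newton step up to a quadratically small perturbation. For this I would use $\beta(t_k,y_k)=\gamma\min\{1,\|E(t_k,y_k)\|^2\}=O(\|E(t_k,y_k)\|^2)$, so the subproblem (\ref{sub:extension}) is $E'(t_k,y_k)d_k+E(t_k,y_k)=O(\|E(t_k,y_k)\|^2)\mathbf e_1$, which is an inexact Newton direction with quadratically small residual. A standard Taylor expansion of $E$ around $(t_k,y_k)$ then yields $\|E(t_k+d_k^t,y_k+d_k^y)\|=O(\|E(t_k,y_k)\|^2)$, i.e., $\theta(t_k+d_k^t,y_k+d_k^y)=O(\theta(t_k,y_k)^2)$, which implies (\ref{search:extension}) at $\alpha_k=1$ for all sufficiently large $k$ and simultaneously gives quadratic contraction of $\|E(t_k,y_k)\|$.

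Finally, the full sequence converges: once quadratic contraction of the residual is in force, the displacements $\|(d_k^t,d_k^y)\|=O(\|E(t_k,y_k)\|)$ are summable, so $\{(t_k,y_k)\}$ is Cauchy and must converge to the unique local limit $(0,\bar y)$. Nonsingularity of $E'(0,\bar y)$ upgrades quadratic residual decay to quadratic convergence of $(t_k,y_k)\to(0,\bar y)$. I expect the main obstacle to be the transition step: verifying that $\alpha_k=1$ is eventually accepted and that the $\beta(t_k,y_k)\mathbf e_1$ perturbation does not destroy the Newton quadratic bound. This is essentially a careful perturbation of the classical Newton-Kantorovich argument and parallels the analysis in \cite{He-Ling-Qi-Zhou-18}, so I would invoke that argument rather than redo it from scratch.
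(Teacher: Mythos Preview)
Your outline matches the paper's approach exactly: the paper gives no proof beyond ``Similar to theorem 3.3 of \cite{He-Ling-Qi-Zhou-18}'', and you explicitly plan to invoke that same argument after assembling boundedness (Theorem~\ref{th:bound-yk}), global convergence (Theorem~\ref{th:conv-extension}), and local nonsingularity.

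One small correction is needed, however. You appeal to Proposition~\ref{prop:M-matrix} to get that $E'(\bar y)=\overline{E}'_y(0,\bar y)$ is a nonsingular M-matrix, but that proposition is proved under the standing assumption of Section~\ref{sec:3} that $b\in\mathbb R^n_{++}$; its argument uses $E'(y)y=y^{[-1]}\circ b>0$, which fails on the coordinates where $b_i=0$. In the present setting ($b\in\mathbb R^n_+$) the correct reference is Theorem~2.8: at a positive solution $\bar y$ one has $f(\bar y)=0$, hence $E'(\bar y)=\mathrm{diag}(\bar y^{[-1]})f'(\bar y)$, and Theorem~2.8 gives that $f'(\bar y)$ is a nonsingular M-matrix, so $E'(\bar y)$ (and therefore $E'(0,\bar y)$) is nonsingular. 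With this substitution your local analysis goes through unchanged.
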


\section{Numerical Results}
\label{sec:5}

In this section, we do numerical experiments to test the effectiveness of the proposed methods.
We implemented our methods in Matlab R2015b and ran the codes on a personal computer with 2.30 GHz CPU and 8.0 GB RAM.
We used a tensor toolbox \cite{Bader-Kolda-15} to proceed tensor computation.

While do numerical experiments, similar to \cite{Han-17,He-Ling-Qi-Zhou-18}, we solved the tensor equation
\[
\hat{F}(x)=\hat{\cal {A}}x^{m-1}-\hat{b}=0
\]
instead of the tensor equation (\ref{eqn:M-teq}),
where $\hat{\cal{A}}:=\cal{A}/\omega$ and $\hat{b}:=b/\omega$ with $\omega$ is the largest value among
the absolute values of components of $\cal{A}$ and $b$. The stopping criterion  is set to
\[
\|\hat{F}(x_k)\|\leq 10^{-10}.
\]
or the number of iteration reaches to 300. The latter case means that the method is failure for the
 problem.

{\bf Problem 1. }\cite{Ding-Wei-16}  We solve tensor equation (\ref{eqn:M-teq}) where ${\cal A}$ is a symmetric strong
M-tensor of order $m$ $(m=3,4,5)$ in the form ${\cal A}=s{\cal I}-{\cal B}$, where tensor ${\cal B}$ is symmetric
whose entries are uniformly distributed in $(0,1)$, and
\[
s=(1+0.01)\cdot\max_{i=1,2,\ldots,n}({\cal B}{\bf e}^{m-1})_i,
\]
where ${\bf e}=(1,1,\ldots,1)^T$.

{\bf Problem 2. }\cite{Xie-Jin-Wei-2017}  We solve tensor equation (\ref{eqn:M-teq}) where $\cal A$ is a symmetric strong M-tensor
of order $m$ $(m=3,4,5)$ in the form ${\cal A}=s{\cal I}-{\cal B}$, and tensor ${\cal B}$ is a nonnegative tensor with
\[
b_{i_1i_2\ldots i_m}=|\mathrm{sin}(i_1+i_2+\ldots +i_m)|,
\]
and $s=n^{m-1}$.

{\bf Problem 3. }\cite{Ding-Wei-16} Consider the ordinary differential equation
$$
 \frac{d^2x(t)}{dt^2}=-\frac{GM}{x(t)^2},\quad t\in(0, 1),
 $$
with Dirichlet's boundary conditions
$$
x(0)=c_0, \quad x(1)=c_1,
$$
where $G\approx 6. 67\times 10^{-11}Nm^2/kg^2$ and $M\approx 5. 98\times 10^{24}$ is the
gravitational constant and the mass of the earth.

Discretize the above equation, we have
\[\left\{
\begin{array}{l}
x^3_1=c^3_0, \\
2x^3_i-x^2_ix_{i-1}-x^2_ix_{i+1}=\frac{GM}{(n-1)^2}, \quad i=2, 3, \cdots, n-1, \\
x^3_n=c^3_1.
\end{array}
\right.
\]
It is a tensor equation, i.e.,
$${\cal A}x^3=b, $$
where ${\cal A}$ is a 4-th order M tensor whose entries are
\[
\left\{
\begin{array}{l}
a_{1111}=a_{nnnn}=1, \\
a_{iiii}=2, \quad i=2, 3, \cdots,  n-1, \\
a_{i(i-1)ii}=a_{ii(i-1)i}=a_{iii(i-1)}=-1/3, \quad i=2, 3, \cdots,  n-1, \\
a_{i(i+1)ii}=a_{ii(i+1)i}=a_{iii(i+1)}=-1/3, \quad i=2, 3, \cdots,  n-1, \\
\end{array}
\right.
\]
and b is a positive vector with
\[
\left\{\begin{array}{l}
b_1=c^3_0, \\
b_i=\frac{GM}{(n-1)^2}, \quad i=2, 3, \cdots,  n-1, \\
b_n=c^3_1.
\end{array}\right.
\]

{\bf Problem 4. }\cite{Li-Guan-Wang-18}  We solve tensor equation (\ref{eqn:M-teq}) where $\cal A$ is a non-symmetric
strong M-tensor of order $m$ $(m=3,4,5)$ in the form ${\cal A}=s{\cal I}-{\cal B}$, and tensor ${\cal B}$ is nonnegative
tensor whose entries are uniformly distributed in $(0,1)$.
The parameter $s$ is set to
\[
s=(1+0.01)\cdot\max_{i=1,2,\ldots,n}({\cal B}{\bf e}^{m-1})_i.
\]

{\bf Problem 5.} We solve tensor equation (\ref{eqn:M-teq}) where $\cal A$ is a lower triangle strong
M-tensor of order $m$ $(m=3,4,5)$ in the form ${\cal A}=s{\cal I}-{\cal B}$, and tensor ${\cal B}$ is
a strictly lower triangular nonnegative
tensor whose entries are uniformly distributed in $(0,1)$.
The parameter $s$ is set to
\[
s=(1-0.5)\cdot\max_{i=1,2,\ldots,n}({\cal B}{\bf e}^{m-1})_i.
\]

For Problem 4 and 5, we need to semi-symmetrize the tensor ${\cal A}$, i.e., find a
semi-symmetric tensor $\tilde{{\cal A}}$ such that
\[
{\cal A}x^{m-1}=\tilde{{\cal A}}x^{m-1}.
\]
The time of semi-symmetrize the tensor is not included in CPU time.

We first test the performance of the Inexact Newton method. We set the start point $x_0=\varepsilon\textbf{e}$, where parameter $\varepsilon$ is selected to satisfy $f(y_0) < b$.
We set the parameter $\sigma=0.1$ and $\rho=0.5$. And $b$ is uniformly distributed in $(0, 1)$ except the $b$ in the problem 3.

For the stability of numerical results, we test the problems of different sizes.
For each pair $(m, n)$, we randomly generate 100 tensors $\cal A$ and $b$.
In order to test the effectiveness of the proposed method, we compare Inexact Newton method
with the QCA method in \cite{He-Ling-Qi-Zhou-18}.
We take parameters $\delta=0.5, \gamma=0.8, \sigma=0.2, \bar{t}=2/(5\gamma)$ as the same as in \cite{He-Ling-Qi-Zhou-18}.
The results are listed in Tables \ref{c1}, where
\[
\mbox{IR}=\frac{\mbox{the number of iteration steps of the Inexact Newton method}}
{\mbox{the number of iteration steps of the QCA method}}
\]
and
\[
\mbox{TR}=\frac{\mbox{the CPU time used by the Inexact Newton method}}{\mbox{the CPU time used by the QCA method}}.
\]

{
\begin{table}[H]
\caption{\small  Comparison between Inexact Newton method and QCA method with $b\in \mathbb{R}_{+}^n$.}\label{c1}
\resizebox{\textwidth}{!}{
\centering
\begin{tabular}{c|c|cccc|ccc|cc}\hline\hline
 $$ & $(m,n)$ &(3,10) & (3,100) & (3,300) & (3,500) &(4,10) & (4,50) & (4,100) & (5,10) & (5,30)\\ \hline
IR & Problem 1 &89.2\%	&	91.5\%	&	91.5\%	&	91.0\%	&	93.0\%	&	93.7\%	&	94.3\%	&	95.2\%	&	96.3\%\\
& Problem 2 &91.0\%	&	91.4\%	&	90.2\%	&	90.5\%	&	95.7\%	&	94.8\%	&	93.1\%	&	97.2\%	&	96.2\%\\
 & Problem 3 &-		&	-	    &	-	    &	-       &   11.1\%	&	9.1\%	&	8.3\%	&	-	    &	-\\
 & Problem 4 &91.8\%	&	91.2\%	&	91.3\%	&	90.5\%	&	94.4\%	&	94.7\%	&	93.2\%	&	97.1\%	&	93.9\%\\
 & Problem 5 &89.8\%	&	90.4\%	&	89.6\%	&	89.3\%	&	95.2\%	&	91.5\%	&	93.0\%	&	95.1\%	&95.6\%\\\hline

TR & Problem 1 &48.0\%	&	66.6\%	&	87.7\%	&	88.2\%	&	67.3\%	&	92.3\%	&	94.0\%	&	80.0\%	&	97.0\%\\
& Problem 2 &50.0\%	&	73.8\%	&	88.8\%	&	88.8\%	&	67.4\%	&	94.0\%	&	93.7\%	&	79.0\%	&	96.6\%\\
	& Problem 3 &-        &	-    	&		-   &	-   	&	20.3\%	&	15.4\%	&	14.2\%	&	-	    &	-\\
& Problem 4 &54.1\%	&	73.3\%	&	89.6\%	&	89.6\%	&	66.0\%	&	94.7\%	&	94.1\%	&	74.6\%	&	95.4\%\\
& Problem 5 &45.7\%	&	74.1\%	&	87.4\%	&	88.5\%	&	59.1\%	&	92.3\%	&	95.0\%	&	74.6\%	&	99.4\%\\ \hline\hline	
\end{tabular}}

\end{table}
}

We then test the effectiveness of the Regularized Newton method. We set the initial point
$x_0=0.1*\bf{e}$ and $b\in \mathbb{R}_{+}^n$ has $0$ zero elements except the problem 3.
We first generate a vector $b^0\in\mathbb{R}^n$ whose elements are uniformly distributed in $(0,1)$, then we set
\[
 b_i=\left\{\begin{array}{ll}
b_i^0, \quad & \mbox{if } b_i^0\leq 0.6,\\
0,  \quad & \mbox{if } b_i^0> 0.6.
\end{array}\right.
\]
to get a vector $b\in \mathbb{R}_{+}^n$.
In order to get the positive solution of the problem 5, the first component of vector $b$ can't be equal to 0, so we set the first component $b_1=0.1.$

We compare the Regularized Newton Method with QCA method. We take the parameters $\sigma=0.1, \rho=0.8, \gamma=0.9$ and $\bar{t}=0.01$ in Regularized Newton Method and the parameters in QCA method is the same as above.
The results are listed in Tables \ref{c2}, where
\[
\mbox{IR}=\frac{\mbox{the number of iteration steps of the Regularized Newton method}}
{\mbox{the number of iteration steps of the QCA method}}
\]
and
\[
\mbox{TR}=\frac{\mbox{the CPU time used by the Regularized Newton method}}{\mbox{the CPU time used by the QCA method}}.
\]
{
\begin{table}[H]
\caption{\small  Comparison between Regularized Newton method and QCA method with $b\in \mathbb{R}_{++}^n$.}\label{c2}
\centering
\resizebox{\textwidth}{!}{
\begin{tabular}{c|c|cccc|ccc|cc}\hline\hline
 $$ & $(m,n)$ &(3,10) & (3,100) & (3,300) & (3,500) &(4,10) & (4,50) & (4,100) & (5,10) & (5,30)\\ \hline
IR & Problem 1  &	92.4\%	&	59.7\%	&	71.6\%	&	67.4\%	&	93.2\%	&	67.2\%	&	59.5\%	&	95.7\%	&78.4\%\\
   & Problem 2  &	83.9\%	&	61.5\%	&	50.3\%	&	49.4\%	&	89.1\%	&	56.0\%	&	59.3\%	&	87.0\%	&59.4\%\\
   & Problem 3  &		-   &		-   &		-   &	-   	&	83.3\%	&	80.0\%	&	81.0\%	&	-	    &	\\
   & Problem 4  &	94.3\%	&	65.8\%	&	58.1\%	&	60.9\%	&	95.1\%	&	67.7\%	&	53.8\%	&	95.5\%	&	76.5\%\\
   & Problem 5  &	80.0\%	&	81.0\%	&	81.1\%	&	81.4\%	&	75.4\%	&	77.6\%	&	76.4\%	&	72.4\%	&74.0\%\\ \hline
TR & Problem 1  &	80.0\%	&	78.7\%	&	89.2\%	&	82.4\%	&	93.2\%	&	77.2\%	&	71.6\%	&	86.8\%	&97.8\%\\
   & Problem 2  &	72.7\%	&	72.9\%	&	61.3\%	&	60.4\%	&	87.5\%	&	61.4\%	&	64.0\%	&	94.1\%	&65.3\%\\
   & Problem 3  &	-	    &	-    	&		-   &		-   &	76.5\%	&	97.6\%	&	98.1\%	&		-   &-\\
   & Problem 4  &	80.6\%	&	81.3\%	&	65.7\%	&	76.7\%	&	92.3\%	&	78.1\%	&	65.0\%	&	97.4\%	&99.6\%\\
   & Problem 5  &	72.1\%	&	94.9\%	&	89.6\%	&	91.0\%	&	86.9\%	&	79.3\%	&	77.7\%	&	82.5\%	&	80.8\%\\\hline\hline	
\end{tabular}}

\end{table}
}

The datas in Table \ref{c1} and \ref{c2} show that for all test problems the
Inexact Newton method and the Regularized Newton method are better than QCA method in terms of
the number of iterations and CPU time.
It is worth noting that although the QCA method in \cite{He-Ling-Qi-Zhou-18} does not established the convergence property in the case of $b\in \mathbb{R}^n_{+}$,
we find that in the case of $b\in \mathbb{R}^n_{+}$, the QCA method can still find the solution of the problem successfully.
For the convenience of readers, we only list the relative results. More detailed numerical results
can be found in the Appendix.

\appendix
\section{Detailed Numerical Results}
In this section, we list the detailed numerical results of the proposed methods compared with QCA method.
The results are listed in Tables \ref{IN1}, \ref{IN2}, \ref{IN3}, \ref{IN4}, \ref{IN5}, \ref{EN1},
\ref{EN2}, \ref{EN3}, \ref{EN4} and \ref{EN5}, where the columns `Iter', `Time', 'Res' and 'Ls-iter' stand for the total number of iterations, the computational time (in second) used for the method, the residual
$\|\hat{\cal{A}}x_k^{(m-1)}-\hat{b}\|$ and the total number of iterations of linear search.
{
\begin{table}[H]
\caption{\small  Comparison between Inexact Newton method and QCA method on Problem 1 .}\label{IN1}
\centering
\begin{tabular}{c|cccc|cccc}\hline\hline
 $$ & \multicolumn{4}{c|}{Inexact Newton method} & \multicolumn{4}{c}{QCA} \\ \hline
$(m,n)$   & Iter& Time    & Res   & Ls-iter      & Iter & Time & Res  & Ls-iter \\ \hline
(3,10)    &6.6  &0.00024  &8.5E-12  &0  &7.4   &0.00050  &6.1E-12 & 0\\	
(3,100)   &9.7  &0.00829  &9.0E-12  &0  &10.6  &0.01244  &1.0E-11 & 0 \\ 				
(3,300)   &11.9 &0.32238  &9.8E-12  &0  &13.0  &0.36766  &1.7E-11 & 0 \\ 				
(3,500)   &12.1 &1.44961  &5.1E-12  &0  &13.3  &1.64275  &7.4E-12 & 0  \\ 				
\hline 				
(4,10)    &6.6  &0.00033  &5.0E-12  &0  &7.1  &0.00049  &9.2E-12  &0\\ 			
(4,50)    &8.9  &0.04552  &1.2E-11  &0  &9.5  &0.04931  &1.4E-11  &0\\			
(4,100)   &10.0 &0.77240  &1.3E-11  &0  &10.6 &0.82138  &6.7E-12  &0\\ 				
\hline 				
(5,10)    &6.0  &0.00052  &1.3E-11  &0 &6.3  &0.00065  &1.3E-11  &0\\ 				
(5,30)    &7.9  &0.15599  &8.9E-12  &0 &8.2  &0.16078  &1.2E-11  &0\\ \hline\hline				

\end{tabular}

\end{table}
}

{
\begin{table}
\caption{\small   Comparison between Inexact Newton method and QCA method on Problem 2.}\label{IN2}
\centering
\begin{tabular}{c|cccc|cccc}\hline\hline
 $$ & \multicolumn{4}{c|}{Inexact Newton method} & \multicolumn{4}{c}{QCA} \\ \hline
$(m,n)$   & Iter& Time    & Res   & Ls-iter      & Iter & Time & Res  & Ls-iter \\ \hline
(3,10)    &7.1  &0.00020  &1.0E-11& 0   &7.8   &0.00040  &4.7E-12  & 0\\	
(3,100)   &9.6  &0.00899  &5.7E-12& 0   &10.5  &0.01218  &9.1E-12 & 0 \\	
(3,300)   &11.9 &0.32718  &8.6E-12& 0   &13.2  &0.36855  &1.1E-11 & 0 \\	
(3,500)   &12.4 &1.49714  &8.0E-12& 0   &13.7  &1.68625  &8.3E-12& 0  \\	
\hline 				
(4,10)    &6.7  &0.00029  &8.6E-12  &0  &7.0  &0.00043  &9.8E-12  &0\\		
(4,50)    &9.1  &0.04654  &7.8E-12  &0  &9.6  &0.04950  &1.4E-11  &0\\		
(4,100)   &9.5  &0.74050  &1.5E-11  &0  &10.2 &0.79047  &1.7E-11  &0\\ 		
\hline 				
(5,10)    &6.9  &0.00049  &6.5E-12  &0 &7.1  &0.00062  &1.3E-11  &0\\
(5,30)    &7.6  &0.15073  &1.0E-11  &0 &7.9  &0.15600  &1.4E-11  &0\\ \hline\hline				

\end{tabular}

\end{table}
}

{
\begin{table}
\caption{\small   Comparison between Inexact Newton method and QCA method on Problem 3.}\label{IN3}
\centering
\begin{tabular}{c|cccc|cccc}\hline\hline
 $$ & \multicolumn{4}{c|}{Inexact Newton method} & \multicolumn{4}{c}{QCA} \\ \hline
$(m,n)$   & Iter& Time    & Res   & Ls-iter      & Iter & Time & Res  & Ls-iter \\ \hline
(4,10)  &1.0  &0.00012  &9.2E-15  &1.0          &9.0   &0.00059  &6.4E-12  &1.0\\ 				
(4,50)  &1.0  &0.00947  &2.0E-15  &1.0          &11.0  &0.06154  &4.2E-14  &1.0\\ 				
(4,100) &1.0  &0.14564  &2.1E-15  &1.0          &12.0  &1.02232  &1.9E-15  &1.0\\ 				
\hline\hline				

\end{tabular}

\end{table}
}

{
\begin{table}
\caption{\small   Comparison between Inexact Newton method and QCA method on Problem 4.}\label{IN4}
\centering
\begin{tabular}{c|cccc|cccc}\hline\hline
 $$ & \multicolumn{4}{c|}{Inexact Newton method} & \multicolumn{4}{c}{QCA} \\ \hline
$(m,n)$   & Iter& Time    & Res   & Ls-iter      & Iter & Time & Res  & Ls-iter \\ \hline
(3,10)  &6.7   &0.00020  &8.8E-12  &0 &7.3  &0.00037  &9.6E-12  &0\\ 				
(3,100) &10.3  &0.00934  &7.9E-12  &0 &11.3  &0.01274  &1.1E-11  &0\\ 				
(3,300) &11.6  &0.31909  &1.2E-11  &0 &12.7  &0.35600  &1.3E-11  &0\\ 				
(3,500) &12.4  &1.50356  &7.9E-12  &0 &13.7  &1.67812  &8.5E-12  &0\\ 				
\hline 				
(4,10)  &6.8  &0.00031  &3.7E-12  &0 &7.2  &0.00047  &8.9E-12  &0\\ 				
(4,50)  &8.9  &0.04571  &1.4E-11  &0 &9.4  &0.04826  &1.1E-11  &0\\ 				
(4,100) &9.6  &0.74759  &1.4E-11  &0 &10.3  &0.79482  &1.4E-11  &0\\ 				
\hline 				
(5,10)  &6.6  &0.00047  &5.4E-12  &0  &6.8  &0.00063  &1.0E-11  &0\\ 				
(5,30)  &7.7  &0.15334  &1.5E-11  &0 &8.2 &0.16067  &1.5E-11  &0\\ 				
 \hline\hline				

\end{tabular}

\end{table}
}

{
\begin{table}
\caption{\small   Comparison between Inexact Newton method and QCA method on Problem 5.}\label{IN5}
\centering
\begin{tabular}{c|cccc|cccc}\hline\hline
 $$ & \multicolumn{4}{c|}{Inexact Newton method} & \multicolumn{4}{c}{QCA} \\ \hline
$(m,n)$   & Iter& Time    & Res   & Ls-iter      & Iter & Time & Res  & Ls-iter \\ \hline
(3,10)   &7.9  &0.00016  &7.2E-12   &0.4 &8.8   &0.00035  &4.7E-12  &0.6\\ 				
(3,100)  &10.3  &0.00758  &1.2E-11  &0.1 &11.4  &0.01023  &9.5E-12  &0.6\\ 				
(3,300)  &12.1  &0.27135  &1.2E-11  &0   &13.5  &0.31054  &8.8E-12  &0.5\\ 				
(3,500)  &12.5  &1.44273  &1.4E-11  &0   &14.0  &1.63077  &1.7E-11  &0.4\\ 				
\hline 				
(4,10)   &8.0  &0.00026  &4.2E-12  &0.5 &8.4  &0.00044  &1.1E-11  &0.7\\ 				
(4,50)   &9.7  &0.04921  &8.8E-12  &0.2 &10.6  &0.05329  &8.2E-12  &0.5\\ 				
(4,100)  &10.6  &0.82689  &7.0E-12  &0.2 &11.4  &0.87036  &1.5E-11  &0.7\\ 				
\hline 				
(5,10)  &7.7  &0.00047  &6.3E-12  &0.5 &8.1  &0.00063  &1.0E-11  &0.7\\ 				
(5,30)  &8.6  &0.17388  &6.0E-12  &0.4 &9.0  &0.17493  &1.4E-11  &0.6\\ \hline\hline				

\end{tabular}

\end{table}
}

{
\begin{table}
\caption{\small   Comparison between Regularized Newton method and QCA method on Problem 1.}\label{EN1}
\centering
\begin{tabular}{c|cccc|cccc}\hline\hline
 $$ & \multicolumn{4}{c|}{Regularized Newton method} & \multicolumn{4}{c}{QCA} \\ \hline
$(m,n)$   & Iter & Time    & Res      & Ls-iter      & Iter  & Time    & Res     & Ls-iter \\ \hline
(3,10)   &7.3    &0.00036  &4.3E-12  &0.0           &7.9    &0.00045  &7.1E-12 &0.0\\ 																		(3,100)  &4.6    &0.00734  &1.2E-11  &0.6           &7.7    &0.00933  &7.5E-12 &4.8\\ 																		(3,300)  &5.3    &0.19312  &9.8E-12  &0.8           &7.4    &0.21646  &1.1E-11 &15.2\\ 																		(3,500)  &6.0    &0.97474  &1.4E-11  &0.9           &8.9    &1.18350  &2.1E-11 &18.0\\ 																		\hline 																		
(4,10)   &8.2    &0.00055  &7.4E-12  &0.0           &8.8    &0.00059  &7.9E-12 &0.0\\ 																		(4,50)   &4.3    &0.02707  &8.4E-12  &0.6           &6.4    &0.03507  &1.1E-11 &3.6\\ 																		(4,100)  &5.0    &0.47484  &1.3E-11  &0.8           &8.4    &0.66361  &3.1E-11 &14.3\\ 																		\hline 																		
(5,10)   &8.9    &0.00079  &8.3E-12  &0.0           &9.3    &0.00091  &8.8E-12 &0.0\\ 																		(5,30)   &4.0    &0.11252  &1.5E-12  &1.0           &5.1    &0.11508  &1.8E-11 &1.5\\ 																		\hline\hline\end{tabular}

\end{table}
}

{
\begin{table}
\caption{\small  Comparison between Regularized Newton method and QCA method on Problem 2.}\label{EN2}
\centering
\begin{tabular}{c|cccc|cccc}\hline\hline
 $$ & \multicolumn{4}{c|}{Regularized Newton method} & \multicolumn{4}{c}{QCA} \\ \hline
$(m,n)$  &Iter   &Time     & Res      & Ls-iter      & Iter  & Time    & Res      & Ls-iter \\ \hline\hline
(3,10)   &5.2    &0.00024  &6.7E-12  &0.8           &6.2    &0.00033  &6.8E-12  &3.6\\																		(3,100)  &6.7    &0.00894  &7.9E-12  &0.8           &10.9   &0.01227  &1.1E-11  &35.4\\ 																		(3,300)  &7.2    &0.25393  &1.0E-11  &1.0           &14.3   &0.41420  &2.2E-12  &69.1\\ 																		(3,500)  &7.9    &1.22181  &1.0E-11  &0.8           &16.0   &2.02200  &1.0E-12  &87.0\\ 																		\hline 																		
(4,10)   &4.9    &0.00035  &1.2E-11  &0.7           &5.5   &0.00040  &1.6E-11 &3.8\\ 																		(4,50)   &6.5    &0.03804  &1.2E-11  &0.8           &11.6   &0.06193  &8.4E-13 &42.3\\ 																		(4,100)  &7.0    &0.61673  &1.2E-11  &0.8           &11.8   &0.96383  &1.3E-12 &50.2\\ 																		\hline 																		
(5,10)   &4.7    &0.00048  &1.3E-11  &0.7           &5.4   &0.00051  &7.0E-12 &3.5\\ 																		(5,30)   &6.0    &0.13580  &1.3E-11  &0.7           &10.1   &0.20807  &1.7E-12 &31.6\\
\hline\hline			
\end{tabular}

\end{table}

{
\begin{table}
\caption{\small Comparison between Regularized Newton method and QCA method on Problem 3.}\label{EN3}
\centering
\begin{tabular}{c|cccc|cccc}\hline\hline
 $$ & \multicolumn{4}{c|}{Regularized Newton method} & \multicolumn{4}{c}{QCA} \\ \hline
$(m,n)$   & Iter  & Time    & Res      & Ls-iter      & Iter & Time    & Res     & Ls-iter \\ \hline														(4,10)    &15.0    &0.00101  &6.9E-16  &0           &18.0   &0.00132  &1.6E-12 &0\\ 																		(4,50)    &16.0    &0.09657  &3.7E-11  &0           &20.0   &0.09892  &3.9E-12 &0\\ 																		(4,100)   &17.0    &1.58356  &9.0E-12  &0           &21.0   &1.61399  &8.8E-14 &0\\ 																		\hline\hline 																		

\end{tabular}

\end{table}

{
\begin{table}
\caption{\small  Comparison between Regularized Newton method and QCA method on Problem 4.}\label{EN4}
\centering
\begin{tabular}{c|cccc|cccc}\hline\hline
 $$ & \multicolumn{4}{c|}{Regularized Newton method} & \multicolumn{4}{c}{QCA} \\ \hline
$(m,n)$   & Iter  & Time    & Res      & Ls-iter      & Iter & Time    & Res     & Ls-iter \\ \hline
(3,10)    &6.6    &0.00029  &7.4E-12  &0.0           &7.0   &0.00036  &6.7E-12 &0.0\\ 																		(3,100)   &4.8    &0.00669  &1.4E-11  &0.6           &7.3   &0.00823  &4.6E-12 &5.8\\ 																		(3,300)   &5.4    &0.18935  &1.2E-11  &0.5           &9.3   &0.28820  &5.9E-11 &17.9\\ 																		(3,500)   &5.6    &0.91665  &9.6E-12  &0.8           &9.2   &1.19580  &2.0E-11 &20.0\\ 																		\hline 																		
(4,10)    &7.8    &0.00048  &8.1E-12  &0.0           &8.2   &0.00052  &6.9E-12 &0.0\\ 																		(4,50)    &4.4    &0.02734  &9.8E-12  &0.6           &6.5   &0.03502  &1.0E-11 &4.0\\ 																		(4,100)   &5.0    &0.46779  &1.4E-11  &0.7           &9.3   &0.72021  &3.7E-11 &17.0\\ 																		\hline 																		
(5,10)    &8.5    &0.00075  &8.8E-12  &0.0           &8.9   &0.00077  &1.0E-11 &0.0\\ 																		(5,30)    &3.9    &0.10700  &1.6E-11  &1.2           &5.1   &0.10740  &1.2E-11 &1.7\\ 																		\hline\hline
\end{tabular}

\end{table}

{
\begin{table}
\caption{\small  Comparison between Regularized Newton method and QCA method on Problem 5.}\label{EN5}
\centering
\begin{tabular}{c|cccc|cccc}\hline\hline
 $$ & \multicolumn{4}{c|}{Regularized Newton method} & \multicolumn{4}{c}{QCA} \\ \hline
$(m,n)$   & Iter   & Time    & Res      & Ls-iter      & Iter & Time & Res  & Ls-iter \\ \hline
(3,10)    &8.0     &0.00031  &2.0E-12  &0.5            &10.0   &0.00043  &6.9E-12 &12.8\\ 																		(3,100)   &11.9    &0.01314  &5.5E-12  &0.8            &14.7   &0.01385  &5.3E-11 &73.4\\ 																		(3,300)   &14.2    &0.36332  &2.1E-11  &0.7            &17.5   &0.40539  &1.0E-11 &109.4\\ 																		(3,500)   &15.3    &2.02149  &2.0E-11  &0.5            &18.8   &2.22143  &4.6E-12 &127.9\\ 																		\hline 																		
(4,10)    &8.6     &0.00053  &4.1E-12  &0.5            &11.4   &0.00061  &1.3E-11 &26.7\\ 																		(4,50)    &12.5    &0.06472  &1.6E-11  &0.8            &16.1   &0.08161  &3.1E-12 &95.1\\ 																		(4,100)   &14.9    &1.16723  &1.6E-11  &0.8            &19.5   &1.50176  &5.2E-13 &144.7\\ 																		\hline 																		
(5,10)    &9.2     &0.00080  &6.9E-12  &0.5            &12.7   &0.00097  &6.3E-12 &44.3\\ 																		(5,30)    &12.8    &0.27238  &1.4E-11  &1.8            &17.3   &0.33700  &3.9E-13 &115.6\\ 																		\hline\hline
\end{tabular}

\end{table}

From the data in the Tables \ref{IN1}, \ref{IN2}, \ref{IN3}, \ref{IN4}, \ref{IN5}, \ref{EN1}, \ref{EN2},
\ref{EN3}, \ref{EN4} and \ref{EN5}, we can see that the proposed methods are effective for all test problems.
In terms of the number of iterations and CPU time, Inexact Newton method and Regularized Newton method are better than QCA method, and the number of linear search of the Regularized Newton method are far less than that of the QCA method.

\end{document}